\newtheorem{prelem}{{\bf Theorem}}
\newtheorem{theorem}{Theorem}[section]
\newtheorem{lemma}[theorem]{Lemma}
\newtheorem{proposition}[theorem]{Proposition}
\newenvironment{proof}{{\bf Proof.}}{\hfill\rule{2mm}{2mm}}
\newtheorem{remarka}[theorem]{Remark}
\def\pr {{\rm \bf Pr}}
\def\Ex {\mathbb{E}}
\def\ex {\mathbb{E}}
\def\Var {{\rm \bf Var}}
\def\D{\Delta}
\def\e{\epsilon}
\def\g{\gamma}
\def\k{\kappa}
\def\z{\zeta}
\def\la{\lambda}
\def\r{\rho}
\def\s{\sigma}
\def\t{\tau}
\def\cald{{\cal D}}
\def\calh{{\cal H}}
\def\calp{{\cal P}}
\def\hf{{1\over2}}
\def\cm{{\cal C}_{\rm max}}
\newcommand{\proofend}{\hspace*{\fill}\mbox{$\Box$}}
\newcommand{\limninf}{\lim_{n \rightarrow \infty}}
\newcommand{\inv}[1]{\mbox{$1\over #1 $}}
\title{The scaling window for a random graph with a given degree sequence}
\author{
{\bf  Hamed Hatami and Michael Molloy\footnote{Research supported by an NSERC Discovery Grant.}} \\
{\small\it Department of Computer Science}\\
{\small University of Toronto} \\
{\small e-mail: hamed@cs.toronto.edu, molloy@cs.toronto.edu}}
\date{}
\begin{document}
\maketitle


\begin{abstract} We consider a random graph on a given degree sequence $\cald$,
satisfying certain conditions.  We focus on two parameters $Q=Q(\cald), R=R(\cald)$.
Molloy and Reed proved that $Q=0$ is the threshold for the random graph to
have a giant component.  We prove that if $|Q|=O(n^{-1/3} R^{2/3})$ then, with high probability,
the size of the largest component of the random graph will be of order $\Theta(n^{2/3}R^{-1/3})$.
If $|Q|$ is asymptotically larger than $n^{-1/3}R^{2/3}$ then the size of the largest component is
asymptotically smaller or larger than $n^{2/3}R^{-1/3}$. Thus, we establish that the scaling window
is $|Q|=O(n^{-1/3} R^{2/3})$.
\end{abstract}

\section{Introduction}

The double-jump threshold, discovered by Erd\H{o}s and R\'enyi\cite{er}, is
one of the most fundamental phenomena in the theory of random graphs.  The component
structure of the random
graph $G_{n,p=c/n}$ changes suddenly when $c$ moves from below one to above one.
For every constant $c<1$,  almost surely\footnote{A
property P holds {\em almost surely} if $\limninf\pr(P)=1$.}
 (a.s.) every component has size $O(\log n)$, at $c=1$ a.s. the largest
component has size of order $\Theta(n^{2/3})$, and at $c>1$ a.s. there exists a single giant component
of size $\Theta(n)$ and all other components have size $O(\log n)$.  For this reason, $c=1$ is often
referred to as the {\em critical point}.

In the 1980's, Bollob\'as\cite{bb2}, {\L}uczak\cite{tl} and others studied the case
where $p=\frac{1+o(1)}{n}$.  They showed that when $p=\inv{n}+\frac{c}{n^{1/3}}$ for
any constant $c$ (positive or negative), the component sizes of $G_{n,p}$ behave as described above for $p=\inv{n}$. Furthermore, if $p$ lies outside of that range, then the size of the largest component behaves very differently: For larger/smaller values of $p$, a.s. the largest component has size asymptotically larger/smaller than $\Theta(n^{2/3})$. That range of $p$ is generally referred to as the {\em scaling window}. See, eg. \cite{bbrg} for further details.

Molloy and Reed\cite{mr1} proved that something analogous to the cases $c<1$ and $c>1$
holds for random graphs on a given degree sequence.  They considered a sequence
$\cald=(d_1,...,d_n)$ satisfying certain conditions,
and chose a graph uniformly at random from amongst all graphs with that degree sequence.
They determined a parameter $Q=Q(\cald)$ such that if $Q<0$ then a.s.  every
component has size $O(n^x)$ for some $x<1$ and if $Q>0$ then a.s. there exists a single giant component
of size $\Theta(n)$ and all other components have size $O(\log n)$.

In this paper, we establish a scaling window around the threshold $Q=0$, under certain conditions for $\cald$.
We will state our results more formally in the next subsection, but in short:
If $\sum d_i^3=O(n)$, then the situation is  very much like that for $G_{n,p}$.  The scaling
window is the range $|Q|=O(n^{-1/3})$ and inside the scaling window, the size of the largest
component is $\Theta(n^{2/3})$. As discussed below, the conditions required in \cite{ks,jl} imply that
$\sum d_i^3=O(n)$, which explains why they obtained their results.
If $\sum d_i^3 \gg n$, then the situation changes: the size of the scaling window becomes asympotically larger,
and the size of the largest component becomes asymptotically smaller.

\subsection{The main results}
Before stating our theorems, we will introduce some notation:

We are given a set of vertices along with the degree $d_v$ of each vertex.
We denote this degree sequence by ${\cal D}$.  We assume that
there is at least one graph with degree sequence $\cald$ (and so, eg., $\sum_v d_v$ is even).
Our random graph is selected uniformly from amongst all graphs with degree sequence $\cald$.

We use $E$ to denote the set of edges, and note that $|E|=\hf\sum_{v\in G}d_v$. We let $n_i$ denote the
number of vertices of degree $i$.
We use ${\cal C}_{\rm max}$ to denote the largest component of the random graph.
We define:
$$Q:=Q({\cal D}):= \frac{\sum_{u \in G} d_u^2}{2|E|}-2,$$
$$R:=R({\cal D}):= \frac{\sum_{u \in G} d_u(d_u-2)^2}{2|E|}.$$
The relevance of $Q,R$ will be made clear in Section \ref{s24}.
The asymptotic order of $R$ is important; note that, when $|E|/n$ and $Q$ are bounded by constants,
$R$ has the same order as $\inv{n}\sum_{u\in G}d_u^3$. The order of $R$ was implicitly seen to
be important in the related papers \cite{jl,ks}, where they required $\inv{n}\sum_{u\in G}d_u^3$
to be bounded by a constant (see Section \ref{srw}).

Molloy and Reed \cite{mr1} proved that, under certain assumptions about $\cald$,
if $Q$ is at least a positive constant, then a.s. $|\cm|\geq cn$ for some $c>0$ and if $Q$ is at most a negative constant then
a.s.  $|\cm|\leq n^x$ for some constant $x<1$.
Some of these assumptions were that the degree sequence converged in certain ways
as $n\rightarrow\infty$; in particular, $n_i/n$ converged to a limit for all $i$ uniformly,
and $Q$ converged to $\frac{\sum_{i\geq0} i^2\times \limninf n_i/n}{2|E|}-2$.
We don't require those assumptions in this paper.

But we do require some assumptions about our degree sequence.
First, it will be convenient to assume that every vertex has degree at least one.
A random graph with degree sequence $d_1,...,d_n$ where $d_i=0$ for every $i>n'$ has the same distribution
as a random graph with degree sequence $d_1,...,d_{n'}$ with $n-n'$ vertices of degree zero added to it.
So it is straightforward to apply our results to degree sequences with
vertices of degree zero.

Anomalies can arise when $n_2=n-o(n)$.  For example, in the extreme case where $n_2=n$,
we have a random 2-regular graph, and in this case the largest component is known
to have size $\Theta(n)$ (see eg. \cite{abt}).
So we require that $n_2\leq (1-\z)n$ for some constant $\z>0$. \cite{jl,ks} required that
$n_1>\z n$ - note that requirement is equivalent to ours when $n_0=0$ and $Q=o(1)$.
See Remark 2.7 of \cite{jl} for a description of some other  behaviours that can
arise when we allow $n_2=n-o(n)$.

As in \cite{mr1,mr2} and most related papers (eg. \cite{fr,jl,ks}),
we require an upper bound on the maximum degree, $\D$.  We take $\D\leq n^{1/3}R^{1/3}(\ln n)^{-1}$,
which is higher than the bounds from \cite{jl,ks,mr1,mr2} and is nearly as high as $\D$
can possibly be in this setting (see Section \ref{sd}).

Finally, since we are concerned with $Q=o(1)$, we can assume $|Q|\leq\frac{\z}{2}$,
and that $\z$ is sufficiently small, eg. $\z<\inv{10}$.
In summary, we assume that $\cald$ satisfies the following:

\noindent{\bf Condition D: }{\em For some constant $0<\z<\inv{10}$
 \begin{enumerate}
\item[(a)] $\Delta \le n^{1/3}R^{1/3}(\ln n)^{-1} $;
\item[(b)] $n_0=0$;
\item[(c)] $n_2\leq(1-\z)n$;
\item[(d)] $|Q|\leq\frac{\z}{2}$.
\end{enumerate}}

Our main theorems are:

\begin{theorem}
\label{thm:UpperBound} For any $\la,\e,\z>0$ there exist $A,B$ and $N$
such that for any $n\geq N$ and any degree sequence $\cald$
satisfying Condition D and with  $-\lambda n^{-1/3} R^{2/3}\leq Q \le \lambda n^{-1/3} R^{2/3}$,
we have
\begin{enumerate}
\item[(a)]$\Pr[|{\cal C}_{\rm max}| \le A n^{2/3} R^{-1/3} ] \le \epsilon$;
\item[(b)]$\Pr[|{\cal C}_{\rm max}| \ge B n^{2/3} R^{-1/3}  ] \le \epsilon$.
\end{enumerate}
\end{theorem}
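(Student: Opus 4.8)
The plan is to analyze the component exploration process by a breadth-first (or depth-first) search on the configuration model, coupled with a branching-process heuristic, and then transfer the conclusion to the uniform random graph. Recall that a random graph with degree sequence $\cald$ can be generated by the pairing model: give each vertex $v$ a set of $d_v$ half-edges and choose a uniformly random perfect matching on the $2|E|$ half-edges. Conditioning on the resulting multigraph being simple changes probabilities by at most a constant factor under Condition D(a) (since $\D\leq n^{1/3}R^{1/3}(\ln n)^{-1}$ forces $\sum_v d_v^2 = O(|E|)\cdot O(1)$ in the relevant regime, so the number of loops and multiple edges is $O(1)$ in expectation), so it suffices to prove (a) and (b) for the pairing model and then absorb the constant-factor loss into $\epsilon$.

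First I would set up the exploration process that reveals a component one half-edge at a time: maintain a set of ``active'' half-edges, at each step pick one, expose its partner, and if the partner lies in an unexplored vertex $u$, add $u$'s other $d_u-1$ half-edges to the active set. The key quantity is the number $A_t$ of active half-edges after $t$ steps. A standard martingale computation shows that, as long as only a small fraction of half-edges has been exposed, $\Ex[A_{t+1}-A_t\mid \mathcal{F}_t]$ is approximately $\frac{\sum_u d_u(d_u-2)\,(\text{fraction of }u\text{ unexposed})}{2|E|}\approx Q - \Theta(tR/n)$ (this is exactly where $Q$ and $R$ enter; the linear term comes from the depletion of high-degree vertices, whose rate is governed by $\sum_u d_u(d_u-2)^2/(2|E|)=R$). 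Thus $A_t$ behaves like a random walk with drift $Q - c t R/n$ and per-step variance $\Theta(1)$, started from $A_0=d_v$ for a random starting vertex $v$ (chosen size-biased by degree, or uniformly, depending on the formulation). When $|Q|=O(n^{-1/3}R^{2/3})$, the natural timescale on which the drift term accumulates to overcome the $O(\sqrt{t})$ fluctuations is $t=\Theta(n^{2/3}R^{-1/3})$, and on that timescale the walk stays positive with constant probability and then dies; the number of vertices absorbed is $\Theta(t/(\text{avg degree}))=\Theta(n^{2/3}R^{-1/3})$. This heuristic is what both bounds must be made rigorous from.

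For the \emph{lower bound} (part (a)), I would argue that with probability bounded below by some $\delta=\delta(A)$ a single exploration survives for $\Theta(n^{2/3}R^{-1/3})$ steps, hence builds a component of size $\geq A n^{2/3}R^{-1/3}$; here ``survives'' means $A_t$ stays positive, which is controlled by comparing $A_t$ with its mean via Azuma/Doob and by a barrier argument handling the reflecting effect near zero. One then amplifies constant success probability to $1-\epsilon$ by running many independent-ish explorations from disjoint vertex sets (standard sprinkling / disjointness bookkeeping on the pairing model), using that failed explorations consume only $o(n)$ half-edges so the ``environment'' for later attempts is essentially unchanged. For the \emph{upper bound} (part (b)), I would bound the probability that any fixed vertex lies in a component of size $\geq B n^{2/3}R^{-1/3}$ by the probability that its exploration walk survives that long; the drift term $-ctR/n$ makes the expected position at time $t=Bn^{2/3}R^{-1/3}$ equal to roughly $Q - cB n^{1/3}R^{2/3} = -\Theta(B)\,n^{1/3}R^{2/3}$, which (once $B$ is large relative to $\lambda$) is far below $-\,(\text{typical fluctuation }n^{1/3}R^{1/3}\cdot\text{something})$; a one-sided Azuma bound then gives survival probability $e^{-\Omega(B^{?})}$ or at least $o(R^{1/3}n^{-2/3})$, and summing over the $\leq n$ choices of starting vertex (with a union bound, or more carefully an expectation-of-number-of-large-components argument) yields part (b). The maximum-degree hypothesis D(a) is exactly what keeps the martingale increments bounded by $o(\sqrt{t})$ on the relevant range so that Azuma's inequality is not wasteful, and D(c),(d) prevent degenerate drift behaviour.

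The main obstacle I anticipate is making the two error terms in the drift estimate uniformly small over the whole time interval of length $\Theta(n^{2/3}R^{-1/3})$ \emph{without} assuming $\sum_v d_v^3=O(n)$: when $R$ is large, the high-degree vertices dominate both the drift correction and the variance, and one must track the ``remaining degree distribution'' carefully (e.g. via a potential function like $\sum_{u \text{ unexposed}} d_u(d_u-2)$ and $\sum_{u\text{ unexposed}} d_u(d_u-2)^2$) and show these change predictably. Controlling the second of these, which feeds the variance and hence the fluctuation scale, is the delicate part; it is also where the near-optimal bound $\D\leq n^{1/3}R^{1/3}(\ln n)^{-1}$ is used to guarantee no single vertex can distort the walk by more than a $(\ln n)^{-1}$ fraction of the target fluctuation. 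Once these concentration statements are in hand, both (a) and (b) follow from fairly standard random-walk-with-negative-drift estimates, and the passage from the pairing model to the uniform model is routine.
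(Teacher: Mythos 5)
There is a genuine gap, and it sits in both halves of your quantitative plan. For the lower bound you assert that a single exploration from a starting vertex survives for $\Theta(n^{2/3}R^{-1/3})$ steps with probability bounded below by a constant $\delta(A)$. That is false: at criticality the walk has per-step variance $\Theta(R)$ (not $\Theta(1)$ as you state --- this matters, since the whole point is to allow $R\to\infty$), so by a gambler's-ruin computation the chance that the exploration from a fixed vertex of degree $d_v$ lasts $T=n^{2/3}R^{-1/3}$ steps is only $\Theta\bigl(d_v n^{-1/3}R^{-1/3}\bigr)=o(1)$. Consequently the ``amplify by sprinkling over disjoint vertex sets'' step, which you dismiss as standard, is actually where all the work would be: you would need $\gg n^{1/3}R^{1/3}$ nearly independent attempts while controlling depletion of half-edges. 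The paper avoids this entirely: it lets one exploration run for $T_1=n^{2/3}R^{-1/3}$ steps \emph{across successive components} (restarting $Y$ at $0$ each time a component is finished), shows via the submartingale $Y_{\min(t,\tau_1)}^2-\tfrac14 R\min(t,\tau_1)$ and the Optional Stopping Theorem that $Y$ reaches height $h=A^{1/4}n^{1/3}R^{1/3}$ within $T_1$ with probability $1-O(\sqrt A)$, and then shows via the supermartingale $M_s^2-2Rs$ (with $M_s=h-\min\{h,Y_{\tau_1+s}\}$) that from height $h$ the walk needs at least $T_2=An^{2/3}R^{-1/3}$ further steps to die, again with probability $1-O(\sqrt A)$; taking $A$ small gives $1-\e$ with no amplification.

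For the upper bound, your primary route --- a one-sided Azuma bound giving per-vertex survival probability $o(R^{1/3}n^{-2/3})$ followed by a union bound over the $n$ starting vertices --- cannot work for any constant $B$: the true probability that a fixed vertex lies in a component of size $\ge Bn^{2/3}R^{-1/3}$ is of order $d_v n^{-1/3}R^{-1/3}$ (exponentially small in $B$ only after this polynomial prefactor), so the union bound is off by a polynomial factor in $n$. The fallback you mention only in passing, namely bounding the number $Z$ of vertices in components of size $\ge K$ and using $\Pr[|\cm|\ge K]\le \Ex[Z]/K$, is in fact the argument the paper uses, and it needs only the weak per-vertex bound $O(d_v n^{-1/3}R^{-1/3})$, which the paper extracts not from concentration of the walk but from optional stopping applied to the exponential submartingale $e^{-Y_{\min(t,\gamma)}/H}$ and to $Y^2_{\min(t,\gamma)}-\tfrac16 R\min(t,\gamma)$. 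Relatedly, your claim that Condition D(a) makes bounded-difference Azuma ``not wasteful'' is incorrect when $R$ grows: increments are only bounded by $\Delta$, and $\Delta\sqrt{T}\gg\sqrt{RT}=n^{1/3}R^{1/3}$, so Azuma on the walk itself is lossy by a factor of roughly $\Delta/\sqrt R$; the paper reserves martingale concentration (in its variance form) for the slowly varying quantities $Q_t$ and $R_t$ only, which is exactly the concentration step you correctly identify as delicate but do not supply. The reduction to the configuration model is fine in spirit, though the constant-factor simplicity probability comes from $|Q|=o(1)$ (Condition D(d)) together with $\Delta=o(|E|^{1/2})$, not from D(a).
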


\begin{theorem}\label{t2} For any $\e,\z>0$ and any function $\omega(n)$ tending to $\infty$ with $n$,
there exists $B,N$ such that
for  any $n\geq N$ and any degree sequence $\cald$
satisfying Condition D and with $Q<-\omega(n)n^{-1/3} R^{2/3}$ we have:
\begin{enumerate}
\item[(a)] $\pr(|\cm|\geq B\sqrt{n/|Q|})<\e$.
\item[(b)] The probability that the random graph contains a component with more than one cycle is at most
$\frac{20}{\omega(n)^3}$.
\end{enumerate}
\end{theorem}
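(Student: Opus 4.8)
The plan is to analyze the random graph by the standard exploration (breadth-first or ``branching'') process on the configuration model associated to $\cald$, then transfer to the uniform random graph. Starting a component-exploration from an arbitrary vertex $v$, at each step we expose the partners of the open half-edges; the key quantity is the number of open half-edges after $t$ steps, which behaves like a random walk whose expected one-step increment is governed precisely by $Q$. Concretely, after revealing a small number of vertices the drift is roughly $Q - (\text{something})\cdot(\text{half-edges used})/(2|E|)$, and with $Q<-\omega(n)n^{-1/3}R^{2/3}$ the process has a strong negative drift. The variance of one step is of order $R$ (this is the role of $R$: $\Ex[d_u(d_u-2)]$ weighted by $d_u/2|E|$ gives the drift correction, and $\Ex[d_u(d_u-2)^2]$-type sums give the second moment). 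I would first set up these one-step estimates carefully, using Condition~D(a) to control the contribution of high-degree vertices and Condition~D(c),(d) to keep $|E|/n$ and the relevant sums comparable.

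For part (a), I would show that a component reaching size $m$ requires the walk (started at height $\le\D$) to stay positive for $m$ steps despite drift $\le -c|Q|$ and step-variance $O(R)$. A submartingale/Azuma or a direct first-moment argument on the exploration bounds the probability that exploration from a fixed $v$ survives $m = B\sqrt{n/|Q|}$ steps: with drift $-c|Q|$ over $m$ steps the walk is pushed down by $cm|Q|$, while the fluctuations over $m$ steps are $O(\sqrt{mR})$; choosing $B$ large makes $cm|Q| \gg \sqrt{mR}$ precisely because $|Q|\gg n^{-1/3}R^{2/3}$ forces $m|Q| = B\sqrt{n|Q|} \gg B\sqrt{mR}$ after substituting $m=B\sqrt{n/|Q|}$. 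Summing the (exponentially small, or at least $o(1/n)$) survival probability over all $n$ starting vertices, and then passing from the configuration model to the simple graph via the standard conditioning (the probability of simplicity is bounded below by a positive constant under these degree conditions, cf.\ \cite{mr1}), gives $\pr(|\cm|\ge B\sqrt{n/|Q|})<\e$.

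For part (b), a component with at least two independent cycles contains a subgraph which is a theta-graph (two vertices joined by three internally disjoint paths) or a pair of cycles joined by a path; the expected number of such configurations in the configuration model is bounded by a sum over ways to choose the branch vertices and the path lengths, each path of length $\ell$ contributing a factor roughly $(\text{expected half-edges})^{\ell}/(2|E|)^{\ell}$, which by the drift estimate is $\le (1 - c|Q|)^{\ell}$-type decay once we account for the $R$-sized second moments at the branch points. Summing the geometric-type series over all path lengths yields a bound of order $(\text{const})/(|Q|^3 \cdot (\text{stuff}))$; substituting $|Q| > \omega(n) n^{-1/3}R^{2/3}$ and simplifying the resulting powers of $n$ and $R$ collapses this to $O(1/\omega(n)^3)$, and Markov's inequality gives the stated $20/\omega(n)^3$ after transferring to the simple graph. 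The main obstacle I anticipate is making the one-step drift-and-variance estimates uniform and tight enough — in particular controlling the error terms in the drift as the exploration proceeds and handling the largest-degree vertices under the relatively generous bound $\D\le n^{1/3}R^{1/3}(\ln n)^{-1}$ — since everything in both parts rests on the clean inequality ``drift $\asymp -|Q|$, variance $\asymp R$'' holding throughout the relevant range of exploration.
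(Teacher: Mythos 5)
Your part (b) follows the paper's route almost exactly: the same two forbidden subgraphs (a theta graph and two cycles joined by a path), a first-moment count in the configuration model in which each internal path vertex contributes a factor $\frac{\sum_x d_x(d_x-1)}{2|E|}=1+Q$, and a geometric series giving $O\bigl(R^2/(|E|\,|Q|^3)\bigr)=O(1/\omega(n)^3)$; the only things you gloss over are the technical need to restrict to subgraphs of size $O(n)$ (the paper uses part (a) to cap path lengths at $n/4$ so that the falling-factorial matching probabilities can be compared to $|E|^{-\ell}$) and the replacement of sums over distinct vertices by sums over tuples.

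Part (a), however, has a genuine gap in the final step. You propose to bound the probability that the exploration from a fixed vertex $v$ survives $m=B\sqrt{n/|Q|}$ steps and then to sum this over all $n$ starting vertices, asserting the per-vertex probability is ``exponentially small, or at least $o(1/n)$.'' It is neither, in general. With drift of order $-|Q|$ and step variance of order $R$, the natural survival probability at scale $m$ is governed by $\exp\bigl(-\Theta(m|Q|^2/R)\bigr)$, and $m|Q|^2/R=B\sqrt{n|Q|^3}/R\geq B\,\omega(n)^{3/2}$; when $\omega(n)$ grows slowly (say $\omega(n)=\log\log n$) this is far smaller than $\ln n$, so the bound is nowhere near $o(1/n)$ and the union bound over $n$ vertices diverges. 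Indeed, the sharpest bound one gets from optional stopping (as in the paper) is $\pr(\mbox{$v$'s component has size}\geq T)\leq \frac{4d_v}{|Q|T}+n^{-10}$, and summing this over $v$ gives roughly $\sqrt{n|Q|}\to\infty$. The missing idea is to apply the first moment not to the event directly but to $Z$, the number of vertices lying in components of size at least $T$: since $|\cm|\geq T$ forces $Z\geq T$, one gets $\pr(|\cm|\geq T)\leq \Ex[Z]/T\leq \frac{1}{T}\sum_v \frac{5d_v}{|Q|T}<\frac{16}{B^2}$, and the extra factor $1/T$ is exactly what makes the bound small. (Secondarily, your ``drift $\leq -c|Q|$ throughout'' needs the concentration statement for $Q_t$, Lemma \ref{lem:concQ}, which is itself a martingale argument valid only for $t\leq T<n^{2/3}R^{-1/3}$, and the transfer to the simple graph should go through Proposition \ref{pcm}; these are as you anticipated, but the union-bound step is the one that actually fails.)
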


\begin{theorem}\label{t3} For any $\e,\z>0$ and any function $\omega(n)$ tending to $\infty$ with $n$,
there exists $A,N$ such that
for  any $n\geq N$ and any degree sequence $\cald$
satisfying Condition D and with $Q>\omega(n)n^{-1/3} R^{2/3}$:
\[\pr(|\cm|\leq AQn/R)<\e.\]
\end{theorem}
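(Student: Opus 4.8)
My plan is to analyze the component‑exposure process (a breadth‑first or depth‑first search along half‑edges in the configuration model) and show that when $Q>\omega(n)n^{-1/3}R^{2/3}$, the exploration started from a suitable set of vertices survives long enough to reach $\Omega(Qn/R)$ vertices with probability $1-o(1)$. The natural object to track is the number of ``open'' (unexplored) half‑edges at each step. When we expose a new vertex reached along a half‑edge, the expected change in the number of open half‑edges is governed by the size‑biased degree distribution: revealing a vertex of degree $d$ with probability proportional to $d$ adds $d-2$ open half‑edges in expectation (one half‑edge is consumed coming in, one leaving). Summing, the drift per step is essentially $\frac{\sum_u d_u(d_u-1)}{\sum_u d_u}-1 = \frac{\sum_u d_u^2}{2|E|}-2 = Q$, at least while only a negligible fraction of half‑edges has been used; this is exactly the content of ``the relevance of $Q,R$'' promised in Section~2.4. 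So the first step is to set up this branching‑process/martingale approximation carefully, bounding the error terms coming from the depletion of half‑edges (which is where $R$, i.e. roughly $\frac1n\sum d_u^3$, enters: the variance/fluctuation of a single step is of order $R/Q\cdot$something, and the second‑order correction to the drift after exposing $t$ half‑edges is of order $-tR/(2|E|)$, so the drift stays positive — bounded below by $Q/2$ — until $t\asymp Qn/R$).

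The second step is a two‑phase argument, standard for supercritical components but needing care because here $Q\to0$. Phase~1: run the exploration from a single ``typical'' starting half‑edge; with probability bounded below by a constant times $Q$ (the classical survival‑probability estimate for a slightly supercritical branching process with offspring mean $1+Q$), the number of open half‑edges reaches, say, $\log^2 n$ without dying out; conditioned on reaching that level, a second‑moment / supermartingale argument shows it keeps growing and the exploration discovers $\Omega(Qn/R)$ vertices whp. Phase~2 handles the boosting of ``probability $\gtrsim Q$'' to ``probability $1-\e$'': start independent explorations from many vertices (or, more precisely, use a sprinkling/union argument on disjoint batches of $\Theta(1/Q)$ starting points), get that at least one of them is large whp, and then observe that in the configuration model any two large explored structures that each use $\Omega(Qn/R)$ half‑edges must intersect whp (a birthday‑type collision bound), so they all lie in one component — giving a single component of the required size. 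One must also translate from the configuration model to the uniform random graph, but this is routine: Condition~D(a) keeps $\sum d_u^2 = O(|E|\cdot)$ under control so that the simple‑graph probability is bounded below by a constant (or one conditions on simplicity and checks the conditional measure does not distort the monotone event), exactly as in \cite{mr1}.

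The third step is to make the ``reaches $\Omega(Qn/R)$ vertices'' translate into $|\cm|\ge AQn/R$: counting vertices, not half‑edges, requires lower‑bounding the number of distinct vertices hit when $t$ half‑edges have been consumed. Since every vertex has degree $\ge1$ and at most $\Delta\le n^{1/3}R^{1/3}(\ln n)^{-1}$, and since in the relevant regime $t\asymp Qn/R$ while $\Delta = o(n^{1/3}R^{1/3}) = o(\sqrt{t}\cdot\text{stuff})$ using $Q\gg n^{-1/3}R^{2/3}$, the expected number of half‑edges landing on already‑seen vertices is lower order, so the number of distinct vertices is $\Theta(t)=\Theta(Qn/R)$; this is a direct computation using Condition~D.

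The main obstacle I expect is controlling the exploration \emph{uniformly} over the whole range $Q>\omega(n)n^{-1/3}R^{2/3}$ while $Q$ may itself be $o(1)$: the branching approximation has relative error that must be shown to be $o(Q)$ per step and $o(1)$ cumulatively over $\Theta(Qn/R)$ steps, and near‑criticality the usual large‑deviation concentration is too weak, so one is forced into a delicate second‑moment argument on the open‑half‑edge count (tracking both its drift $\sim Q$ and its per‑step variance $\sim R/(2|E|)\cdot(\text{current size})$) — essentially the same difficulty that makes Theorems~\ref{thm:UpperBound} and \ref{t2} hard, and one should expect to reuse the machinery (martingale/optional‑stopping estimates on the half‑edge surplus) developed for those. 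A secondary nuisance is that the ``disjoint batches'' in Phase~2 consume $\Theta(1/Q)\cdot\log^2 n$ half‑edges, which must be shown to be a negligible fraction of $2|E|$ so that the batches are genuinely close to independent; this needs $1/Q = o(|E|/\log^2 n)$, which follows comfortably from $Q\gg n^{-1/3}R^{2/3}$ and $R=\Omega(1)$.
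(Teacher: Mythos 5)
There is a genuine gap, and it sits precisely in the regime this theorem is meant to cover. Your Phase~1 rests on two quantitative claims that fail when the step variance is taken into account. First, the survival probability of the near-critical exploration started from one vertex is not $\Theta(Q)$: the offspring (size-biased) variance of a step is $\approx R$, so the classical estimate gives survival probability $\Theta(Q/R)$, and Condition~D allows $R$ to grow polynomially (up to roughly $n^{1/2}$, since $R\le 2\Delta$). Hence batches of $\Theta(1/Q)$ starting points do not boost the success probability to a constant when $R\to\infty$; you would need $\Theta(R/Q)$ starts, and the case of growing $R$ is exactly the new content of the theorem. Second, and more seriously, the intermediate level $\log^2 n$ is far too low: a walk with drift $\Theta(Q)$ and per-step variance $\Theta(R)$ started from height $h$ still dies out with probability about $e^{-\Theta(Qh/R)}$, so ``keeps growing whp'' requires $h\gg R/Q$, and $R/Q$ can be as large as $n^{1/3}R^{1/3}/\omega(n)$ (already $\approx n^{1/3}/\omega$ when $R=\Theta(1)$ and $Q=\omega n^{-1/3}$). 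Conditioned on reaching $\log^2 n$ open half-edges, the process in fact dies before reaching size $\Theta(Qn/R)$ with probability $1-o(1)$, so the asserted ``whp growth from $\log^2 n$'' is false throughout essentially the whole supercritical range under consideration. Your product of the two (wrong) factors happens to be of the right order $Q/R$, but the two-stage argument as stated does not go through.

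For comparison, the paper avoids the survival-probability/boosting architecture altogether. It runs the single global exploration (restarting at a fresh vertex whenever a component is exhausted), uses the concentration of $Q_t$ and $R_t$ (Lemmas~\ref{lem:concQ} and~\ref{lem:concR}) to keep $Q_t\ge\frac14 Q$ and $R_t\ge\frac12 R$ up to time $T_1+T_2$ with $T_1=\frac{\z}{2000}Qn/R$, $T_2=AQn/R$, and applies the Optional Stopping Theorem to the submartingale $Y_{\min(t,\tau_1)}^2-\frac{R}{4}\min(t,\tau_1)$ to show that $Y_t$ reaches the level $h=A^{1/4}\sqrt{Qn}$ within $T_1$ steps except with probability $O(\sqrt A)$ (note $hQ/R\ge\omega(n)^{3/2}\to\infty$, which is why this choice of $h$, unlike $\log^2 n$, suffices); a supermartingale argument for the reflected walk $h-\min\{h,Y_{\tau_1+s}\}$ then shows the walk does not return to $0$ within a further $T_2$ steps except with probability $O(\sqrt A)$, which already produces one component of size at least $AQn/R$ with probability $1-\e$. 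Your plan could be repaired by replacing $\log^2 n$ with a level $\gg R/Q$ (e.g.\ $\sqrt{Qn}$) and $\Theta(1/Q)$ starts with $\Theta(R/Q)$, but then the sprinkling/collision step is superfluous, since the theorem asks only for the existence of one large component, not uniqueness.
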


Note that the bounds on $|\cm|$ in Theorems \ref{t2} and \ref{t3} are
$B\sqrt{n/|Q|}<Bn^{-1/3}R^{2/3}/\sqrt{\omega(n)}$ and $AQn/R>A\omega(n)n^{2/3}R^{-1/3}$. So
our theorems imply that $|Q|=O(n^{-1/3} R^{2/3})$ is the scaling window for any degree sequences
that satisfy Condition D, and that in the scaling window
the size of the largest component is $\Theta(n^{2/3} R^{-1/3})$.

Note also that Theorem \ref{t2}(b) establishes that when $Q$ is below the scaling window
then, with high probability, every component is either a tree or is unicyclic.  This was
previously known to be the case for the $G_{n,p}$ model\cite{tl}.

The approach we take for Theorems \ref{thm:UpperBound} and \ref{t3} closely follows that of Nachmias and Peres\cite{np1} who applied some Martingale analysis,
including the Optional Stopping Theorem, to obtain a short elegant proof of what happens inside
the scaling window for $G_{n,p=c/n}$. See also \cite{np2} where they apply similar analysis to
also obtain a short proof of what happens outside the scaling window, including tight bounds on
the size of the largest component.

The approach we take for Theorem \ref{t2} is a first moment argument similar in spirit to one
applied in \cite{tl} to $G_{n,p}$, along with a very simple Martingale analysis.

\subsection{Our bound on $\D$}\label{sd}
Since there is a vertex $v$ of degree $\D$, we always have $R> \frac{d_v(d_v-2)^2}{2|E|}\geq \frac{\D^3}{18|E|}$.
Lemma \ref{lem:observations} in the next section gives $|E|\leq (1+\hf Q)n<2n$.  This yields $R>\frac{\D^3}{36n}$ and hence $\D<4n^{1/3}R^{1/3}$.
So our bound on $\D$ is within a factor of $O(\log n)$ of the maximum that $\D$ can possibly be when $|Q|=o(1)$.
In fact, it is possible to reduce this factor somewhat - our arguments still work if $\D\leq \k n^{1/3}R^{1/3}(\log n)^{-1/2}$ for some sufficiently small constant $\k$ that depends on $\la,\e,\z$.
But it can't be eliminated entirely:

Consider a degree sequence where one vertex $v$ has degree $\D\gg n^{1/3}$, all other vertices have small
degree and the contribution of those other vertices to $Q$ is $O(n^{-1/3})$; eg. $\frac{3}{4}$ of them have degree 1 and the others have degree 3.  Then we have $R=\Theta(\frac{\D^3}{n})$ and $Q=\Theta(\frac{\D^2}{n})=\Theta(n^{-1/3}R^{2/3})$ and so it is within what
our results say is the scaling window.  However, the same arguments that we use to prove
Theorem \ref{thm:UpperBound}(a) will prove that, with high probability, the random graph has a component
of size $\Theta(n^{2/3})\gg n^{2/3}R^{-1/3}$ (see the remark following that proof in Section \ref{stua}).

What causes that degree sequence to behave in this manner is that $R$ is large entirely because of a single vertex.
If we remove $v$, then the remaining degree sequence has $R=O(1)$.  Note that our bound on $\D$ in Condition D
is equivalent to $R>\frac{\D^3(\ln n)^3}{n}$ and so it is always satisfied if, eg., there are at least $(\ln n)^3$
vertices of degree $\D$.  Our bound on $\D$ can be viewed as a condition that the asymptotic order
of $R$ is determined by several high degree vertices. On the other hand, there are counterexamples when
it is determined by a small number of vertices.

\subsection{Related Work}\label{srw}
In 2000, Aiello, Chung and Lu\cite{acl} applied the results of Molloy and Reed\cite{mr1,mr2} to a model for
massive networks. They also extended those results to apply to power law degree sequences with maximum degree
higher than that required by \cite{mr1,mr2}.
Since then, that work been used numerous times to analyze massive network models
arising in a wide variety of fields such as physics, sociology and biology (see eg. \cite{nbw}).

Cooper and Frieze\cite{cf} proved, amongst other things, an analogue of the main results of \cite{mr1,mr2} in the
setting of giant strongly connected components in random digraphs.

Fountoulakis and Reed\cite{fr} extended the work of \cite{mr1} to degree sequences that do not satisfy the
convergence conditions required by \cite{mr1}.  They require $\D\leq |E|^{1/2-\e}$ which in their setting implies
$\D\leq O(n^{1/2-\e})$.

Kang and Seierstad\cite{ks} applied generating functions to
study the case where $Q=o(1)$, but is outside of
the scaling window. They require a maximum degree of at most $n^{1/4-\e}$ and that
the degree sequences satisfy certain conditions
that are stronger than those in \cite{mr1}; one of these conditions implies that $R$
is bounded by a constant. Based on what is known for $G_{n,p}$, it was natural
to guess that for $|Q|\gg n^{-1/3}$ we would
have $|\cm|\neq\Theta(n^{2/3})$.  They proved that if
$Q\ll -n^{-1/3}$ then $|\cm|\ll n^{2/3}$, and if $Q\gg n^{-1/3}\log n$ then $|\cm|\gg n^{2/3}$.
So for the case where $R=O(1)$ is bounded, this almost confirmed that natural guess -
except that they did not cover the range where $n^{-1/3}\ll Q=O(n^{-1/3}\log n)$.

Jansen and Luczak\cite{jl} used simpler techniques to obtain a result along the lines of that in \cite{ks}.
They require a maximum
degree of $n^{1/4}$, and they also require $R=O(1)$; in fact, they require $\inv{n}\sum_{v} d_v^{4+\eta}$
to be bounded by a constant (for some arbitrarily small constant $\eta>0$), but they conjecture that
having $\inv{n}\sum_{v} d_v^{3}$ bounded (i.e. $R$ bounded) would suffice.
For $Q\gg n^{-1/3}$, they prove that $|\cm|=\Theta(nQ)\gg n^{2/3}$.
Thus (in the case that their conditions hold) they eliminated the gap left over from \cite{ks}.
Furthermore, this also shows that the asymptotic order of $|\cm|$ increases with $|Q|$
in that range, thus eliminating the possibility of a scaling window extending into that range.
They also used their techniques to obtain a simpler proof of the main results from \cite{mr1,mr2}.

So for the case $R=O(1)$, the bound on the scaling window provided by Theorems \ref{t2}(a) and \ref{t3} was previously known  (under somewhat stronger conditions).  But it was not known that $|Q|=O(n^{-1/3})$ was
indeed the scaling window; it was possibly smaller.   In fact, it was not even clear that there was any scaling
window  in terms of $Q$ at all.  No bounds on $|\cm|$ were known for when $|Q|=O(n^{-1/3})$. And nothing was known for the case when $R$ grows with $n$.

\section{Preliminaries}
\subsection{Some Observations}
We start with two easy observations.

\begin{lemma}
\label{lem:observations}
If $\cald$ satisfies Condition D(b,c) then:
\begin{enumerate}
\item [(a)] $\frac{1}{2}n \le |E| \le (1+\hf Q)n$, and $\sum_{u \in V} d_u^2=(4 +2Q)|E|$.
\item [(b)] $\frac{\zeta}{4} \le R \le 2\Delta$.
\end{enumerate}
\end{lemma}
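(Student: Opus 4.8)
The plan is to derive everything from the two defining identities together with Condition D, proving (a) first and invoking it in (b). For (a), the equality $\sum_{u}d_u^2=(4+2Q)|E|$ is just the definition of $Q$ rearranged (using $\sum_u d_u=2|E|$); subtracting $2\sum_u d_u$ from it gives the identity $\sum_u d_u(d_u-2)=2Q|E|$, which I will reuse. The lower bound $|E|\ge n/2$ is immediate from Condition D(b): since every $d_u\ge 1$, we get $2|E|=\sum_u d_u\ge n$.

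The only step in (a) requiring an idea is the upper bound $|E|\le(1+\tfrac12 Q)n$, equivalently $\sum_u(d_u-2)\le Qn$. I would prove it by a pointwise estimate, one vertex at a time: for every integer $d\ge 1$ and every $Q$ with $|Q|\le 1$,
\[ d-2-Q\ \le\ \tfrac12\,d\,(d-2-Q). \]
This is verified directly for $d=1$ (both sides negative; it reduces to $Q\ge -1$) and $d=2$ (both sides equal $-Q$), while for $d\ge 3$ both sides are positive and the ratio of right to left is $d/2\ge \tfrac32$. Summing over the vertices, the right-hand side becomes $\tfrac12\bigl(\sum_u d_u(d_u-2)-Q\sum_u d_u\bigr)=\tfrac12(2Q|E|-2Q|E|)=0$, so $\sum_u(d_u-2)-Qn\le 0$, which rearranges to the claim. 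The mild obstacle is spotting the multiplier $\tfrac12$: any $\lambda\in[\tfrac13,\tfrac12]$ works when $Q\ge 0$, and $\tfrac12$ also handles $Q<0$.

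For the lower bound in (b), every vertex of degree $\ne 2$ (an integer $\ge 1$) has $d_u(d_u-2)^2\ge 1$, so $\sum_u d_u(d_u-2)^2\ge |\{u:d_u\ne 2\}|=n-n_2\ge \zeta n$ by Condition D(c). From part (a) and D(d), $2|E|\le (2+Q)n\le(2+\tfrac\zeta2)n<4n$, hence $R\ge \tfrac{\zeta n}{2|E|}>\tfrac{\zeta}{4}$.

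For the upper bound $R\le 2\Delta$, I would again split $\sum_u d_u(d_u-2)^2$ by degree: degree-$1$ vertices contribute $n_1$, degree-$2$ vertices contribute $0$, and for $d_u\ge 3$ I use $(d_u-2)^2\le(\Delta-2)(d_u-2)$ (since $1\le d_u-2\le\Delta-2$), so $\sum_{d_u\ge 3}d_u(d_u-2)^2\le(\Delta-2)\sum_{d_u\ge 3}d_u(d_u-2)=(\Delta-2)(n_1+2Q|E|)$ by the identity from (a). Thus $2|E|R\le n_1(\Delta-1)+2Q|E|(\Delta-2)$. If $Q\le 0$ the second term is $\le 0$ and $n_1\le n\le 2|E|$ gives $R\le\Delta-1$; if $Q>0$, bounding $n_1\le 2|E|$, $Q\le\tfrac\zeta2$ and $\Delta-2\le\Delta$ gives $R\le(\Delta-1)+\tfrac\zeta2\Delta$. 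Either way $R\le 2\Delta$ (note $\Delta\ge 2$, since otherwise every degree is $1$, forcing $Q=-1$ and contradicting D(d)). Nothing here is delicate; the only things to watch are the small-degree checks in the pointwise inequalities and keeping track of which clause of Condition D is used at each step.
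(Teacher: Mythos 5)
Your proof is correct, and for part (b) it is essentially the paper's own argument: the same split into degree classes, the bound $(d_u-2)^2\le(\Delta-2)(d_u-2)$ for $d_u\ge 3$, and the identity $\sum_u d_u(d_u-2)=2Q|E|$; your explicit case analysis on the sign of $Q$ just replaces the paper's one-line estimate $(\Delta-2)\bigl(Q+\tfrac{n_1}{2|E|}\bigr)+\tfrac{n_1}{2|E|}\le(\Delta-2)(Q+1)+1<2\Delta$. Where you genuinely diverge is the upper bound in (a): the paper gets $|E|\le(1+\tfrac12 Q)n$ in one stroke from Cauchy--Schwarz, $2|E|=\sum_u d_u\le\sqrt{n}\sqrt{\sum_u d_u^2}=\sqrt{n}\sqrt{(4+2Q)|E|}$, which needs no hypothesis on $Q$ whatsoever; your pointwise inequality $d-2-Q\le\tfrac12\,d\,(d-2-Q)$ needs $Q\le 1$ in the case $d\ge 3$ (the case $d=1$ needs $Q\ge-1$, which is automatic since $d_u^2\ge d_u$). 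The lemma is stated under Condition D(b,c) only, and without a bound on $Q$ your route breaks down (e.g.\ a $4$-regular sequence has $Q=2$ and $|E|=2n$, attaining equality in the conclusion while violating your pointwise inequality at $d=3$), so strictly speaking your part (a) imports D(d). In the paper this is harmless --- the lemma is only ever applied under the full Condition D, and indeed both your part (b) and the paper's own part (b) already lean on $|Q|<1$ --- but it means the Cauchy--Schwarz argument proves the cleaner, unconditional statement, while your argument buys a purely term-by-term, convexity-free derivation at the cost of that extra assumption.
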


\begin{proof}

\noindent{\bf Part (a)}
$$2|E| Q  =-4|E|+\sum_{u \in V} d_u^2,$$
which establishes the second assertion. Now by the Cauchy-Schwarz
inequality,
$$2|E|=\sum_{u \in V} d_u \le \sqrt{n} \sqrt{\sum_{u \in V} d_u^2} \le \sqrt{n} \sqrt{4|E|(1+\hf Q)},$$
which shows that $|E| \le (1+\hf Q)n$.  The fact that every vertex has degree at least one implies $|E|\geq\frac{1}{2}n$.

%
%
%

\noindent{\bf Part (b)}
\[R = \sum_{u \in V} \frac{d_u (d_u-2)^2}{2|E|} \ge \sum_{u: d_u \neq
2} \frac{d_u}{2|E|} \ge \frac{\zeta n}{2|E|} \ge \frac{\zeta}{4}.\]
On the other hand for sufficiently large $n$,
\begin{eqnarray*}
R&=&\sum_{u}\frac{d_u(d_u-2)^2}{2|E|}
\le (\Delta-2) \left(\sum_{u:d_u>1}\frac{d_u(d_u-2)}{2|E|}\right)+\frac{n_1}{2|E|}\\
&\le& (\Delta-2) (Q+\frac{n_1}{2|E|})+\frac{n_1}{2|E|}\leq(\D-2)(Q+1)+1 < 2\Delta,
\end{eqnarray*}
since $|Q|<1$.
\end{proof}
\subsection{The Random Model \label{sec:Model}}

In order to generate a random graph with a given degree sequence
${\cal D}$, we use the {\em configuration model} due to Bollob\'as\cite{bb}
and inspired by Bender and Canfield\cite{bc}.  In particular, we:
\begin{itemize}
\item Form a set $L$ which contains $d_v$ distinct copies of every
vertex $v$.

\item Choose a random perfect matching over the elements of $L$.

\item Contract the different copies of each vertex $v$ in $L$ into a single vertex.
\end{itemize}
This may result in a graph ${\cal G}({\cal D})$ with
multiple edges and loops, but our conditions on $\cald$ imply that for $n$ sufficiently large,
the probability that ${\cal G}({\cal D})$ is simple is bounded away from zero.
Furthermore if one
conditions on ${\cal G}({\cal D})$  being simple, then it is
uniformly distributed over the simple graphs with degree sequence
${\cal D}$. This allows us to translate results about ${\cal G}({\cal D})$
to results about a uniform simple graph with degree sequence $\cald$.

\begin{proposition}\label{pcm}
Consider any degree sequence $\cald$ satisfying Condition D(b,c,d).
Suppose that a property $\calp$ holds with
probability at most $\e$ for a uniformly random configuration with degree sequence $\cald$.  Then
for a uniformly random graph with
degree sequence $\cald$, $\pr(\calp)\leq \e \times e$.
\end{proposition}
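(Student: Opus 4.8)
The plan is to deduce this from the standard fact that a uniformly random configuration with degree sequence $\cald$ produces a simple graph with probability bounded away from zero, together with the conditioning statement already quoted in Section \ref{sec:Model}: conditioned on being simple, the configuration is a uniformly random simple graph with degree sequence $\cald$. Writing $S$ for the event that $\calg(\cald)$ is simple, the claim reduces to showing $\pr(S)\ge 1/e$ for $n$ sufficiently large under Condition D(b,c,d). Given that, if $\calp$ holds for a uniformly random simple graph with probability $p$, then $\pr(\calp\mid S)=p$ in the configuration model, and a random configuration satisfies $\pr(\calp)\ge \pr(\calp\cap S)=\pr(\calp\mid S)\pr(S)\ge p/e$; so if $\pr(\calp)\le\e$ in the configuration model then $p\le \e\times e$, which is exactly the stated conclusion (with the roles of ``configuration'' and ``graph'' as in the proposition).

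The first step is therefore to estimate $\pr(S)$ from below. The classical computation (Bollob\'as \cite{bb}) gives, asymptotically, $\pr(S)\sim \exp(-\mu-\mu^2)$ where $\mu=\frac{1}{2}\cdot\frac{\sum_v d_v(d_v-1)}{2|E|}$ is the expected number of loops; the expected number of multiple edges is of the same order $\mu^2$. Using $\sum_v d_v^2=(4+2Q)|E|$ from Lemma \ref{lem:observations}(a) and $|Q|\le\z/2<1$, we get $\sum_v d_v(d_v-1)=(2+2Q)|E|$, hence $\mu=\frac{2+2Q}{4}=\frac{1}{2}+\frac{Q}{2}\le \frac{1}{2}+\frac{\z}{4}<\frac{3}{5}$. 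Then $\mu+\mu^2<\frac{3}{5}+\frac{9}{25}<1$, so $\pr(S)\ge e^{-(\mu+\mu^2)}>1/e$ for $n$ large — with a bit of room to absorb the $1+o(1)$ error in the asymptotic estimate. The point of Condition D(a) (the bound on $\D$) is to make sure the higher moments do not spoil the standard asymptotic for $\pr(S)$; since $\D\le n^{1/3}R^{1/3}(\ln n)^{-1}$ and $R\le 2\D$ by Lemma \ref{lem:observations}(b), we have $\D=o(n^{1/2}/\log n)$, comfortably inside the regime where $\pr(S)=(1+o(1))e^{-\mu-\mu^2}$ holds.

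The main obstacle is purely one of bookkeeping on that last asymptotic: one must either cite a version of the simple-graph probability estimate that is valid for the growing maximum degree allowed here, or reprove the lower bound $\pr(S)\ge 1/e$ directly. A clean way to do the latter is the standard switching/exposing-edges argument: reveal the matching on $L$ one pair at a time, and bound the conditional probability that the next pair creates a loop or a repeated edge; summing these bounds shows the expected number of ``bad'' coincidences is $\mu+\mu^2+o(1)<1$, and a short second-moment or Poisson-approximation argument (or simply a union/Janson-type bound) then yields $\pr(\text{no bad coincidence})\ge 1/e$. Since the constant in the proposition is stated loosely as $\e\times e$, we have slack and do not need the sharp constant; any bound of the form $\pr(S)\ge c>1/e$ would let us replace $e$ by $1/c$, so it suffices to show $\limninf \pr(S)>1/e$, which the above estimate on $\mu$ delivers with room to spare.
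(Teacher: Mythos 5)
Your proposal is correct and takes essentially the same route as the paper: lower-bound the probability $\pr(S)$ that the configuration is simple by $e^{-1}$ (your exponent $\mu+\mu^2=\frac{1+Q}{2}+\frac{(1+Q)^2}{4}=\frac{3}{4}+Q+\frac{Q^2}{4}<1$ is exactly the quantity the paper computes) and then conclude via $\pr(\calp\mid S)\le\pr(\calp)/\pr(S)$. The one step you flag as needing care --- justifying the asymptotic for $\pr(S)$ at the growing maximum degree allowed here --- is resolved in the paper simply by citing Corollary 1.5 of \cite{sj}, which applies because $\D=o(|E|^{1/2})$, so no switching or Poisson-approximation argument is needed.
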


\begin{proof}
\noindent Let $\calh$ be a random configuration with degree sequence $\cald$.
Lemma \ref{lem:observations}(b) implies that $\D=o(|E|^{1/2})$. This allows us to apply
Corollary 1.5 of \cite{sj}, which states that the probability of our configuration
being simple is
\[e^{\frac{1}{4}-\frac{1}{4}\left(\frac{\sum_{v\in G}d_v^2}{2|E|}\right)^2}+o(1).\]
Lemma \ref{lem:observations}(a) implies that this is at most
\[e^{\frac{1}{4}-\frac{1}{4}\left(\frac{4+2Q}{2+Q}\right)^2}+o(1)=e^{-3/4}+o(1)>e^{-1},\]
for $n$ sufficiently large.
The probability that a random graph
with degree sequence $\cald$ has $\calp$ is:
\[\pr(\calh \mbox{ has }\calp|\calh \mbox{ is simple})
\leq\pr(\calh \mbox{ has }\calp)/\pr(\calh \mbox{ is simple})\leq \e \times e.\]
\end{proof}

\subsection{Martingales}
A random sequence $X_0,X_1,...$ is a {\em martingale} if for all
$i\geq0$, $\ex(X_{i+1}|X_0,...,X_i)=X_i$.  It is a {\em submartingale}, resp. {\em supermartingale},
if for all $i\geq0$, $\ex(X_{i+1}|X_0,...,X_i)\geq X_i$, resp. $\ex(X_{i+1}|X_0,...,X_i)\leq X_i$.

A {\em stopping time} for a random sequence $X_0,X_1,...$ is a step $\t$ (possibly $\t=\infty$)
such that we can determine whether $i=\t$ by examining only $X_0,...,X_i$.
It is often useful to view a sequence as, in some sense, halting at time $\t$; a convenient
way to do so is to consider the sequence $X_{\min(i,\t)}$, whose $i$th term is $X_i$ if
$i\leq\t$ and $X_{\t}$ otherwise.

In our paper, we will make heavy use of the Optional Stopping Theorem.
The version that we will use is the following, which is implied by Theorem 17.6 of \cite{ypbook}:

\noindent {\bf The Optional Stopping Theorem} {\em Let $X_0,X_1,...$ be a  martingale
(resp. submartingale, supermartingale), and let $\t\geq 0$
be a stopping time. If there is a fixed bound $T$ such that
$\pr(\t\leq T)=1$ then
$\ex(X_{\t})=X_0$ (resp. $\ex(X_{\t})\geq X_0$, $\ex(X_{\t})\leq X_0$).}

We will also use the following concentration theorem, which is given by
Theorems 6.1 and 6.5 from \cite{cl}.
\begin{theorem}
\label{thm:MartingaleConc} Let $X_0,X_1,...$ be a martingale  satisfying
\begin{enumerate}
\item[(a)] $\Var(X_i|X_0,X_1,...,X_{i-1}) \le \sigma_i^2$, for $1 \le i \le n$.
\item[(b)] $|X_i-X_{i-1}| \le M$, for $1 \le i \le n$.
\end{enumerate}
Then
$$\Pr[|X-\Ex X| \ge \r] \le 2e^{-\frac{\r^2}{2(M\r+\sum \sigma_i^2)}}.$$
\end{theorem}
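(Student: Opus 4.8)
The plan is to run the exponential-moment (Chernoff/Bernstein) method for martingales. Write ${\cal F}_{i-1}$ for the information in $X_0,\dots,X_{i-1}$, let $D_i:=X_i-X_{i-1}$ be the martingale differences, and set $S:=\sum_{i=1}^n\sigma_i^2$. Since $(X_k)$ is a martingale, $\ex(D_i\mid{\cal F}_{i-1})=0$ and $\ex(D_i^2\mid{\cal F}_{i-1})=\Var(X_i\mid{\cal F}_{i-1})\le\sigma_i^2$, while hypothesis (b) gives $|D_i|\le M$.

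The first step is a one-step exponential estimate: for every $\lambda\ge0$,
\[\ex\!\left(e^{\lambda D_i}\mid{\cal F}_{i-1}\right)\le\exp\!\big(\psi(\lambda)\,\sigma_i^2\big),\qquad\psi(\lambda):=\frac{e^{\lambda M}-1-\lambda M}{M^2}.\]
This rests on the elementary fact that $x\mapsto(e^x-1-x)/x^2$ is nondecreasing on all of $\mathbb{R}$; applied with $x=\lambda D_i\le\lambda M$ it gives the pointwise bound $e^{\lambda D_i}\le 1+\lambda D_i+\psi(\lambda)D_i^2$, and taking conditional expectations annihilates the linear term, bounds the quadratic term by $\psi(\lambda)\sigma_i^2$, and finishes with $1+y\le e^y$.

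Next, for fixed $\lambda\ge0$ consider $Z_k:=\exp\!\big(\lambda(X_k-X_0)-\psi(\lambda)\sum_{i\le k}\sigma_i^2\big)$. Then $Z_0=1$ and, by the previous step, $\ex(Z_k\mid{\cal F}_{k-1})\le Z_{k-1}$, so $(Z_k)_{k\le n}$ is a nonnegative supermartingale and hence $\ex(Z_n)\le\ex(Z_0)=1$ (this is the Optional Stopping Theorem quoted above applied at the deterministic time $n$, though here it is immediate). Markov's inequality then yields, for every $\lambda\ge0$,
\[\Pr[\,X_n-X_0\ge\r\,]\le e^{-\lambda\r+\psi(\lambda)S}.\]
To finish one optimizes $\lambda$: from $e^{\lambda M}-1-\lambda M=\sum_{k\ge2}(\lambda M)^k/k!\le\frac{(\lambda M)^2}{2}\sum_{k\ge0}(\lambda M/3)^k$ one gets $\psi(\lambda)\le\frac{\lambda^2/2}{1-\lambda M/3}$ for $0\le\lambda<3/M$, and the choice $\lambda=\r/(S+M\r/3)$ (which is $<3/M$) makes the exponent equal to $-\r^2/\big(2(S+M\r/3)\big)\le-\r^2/\big(2(M\r+S)\big)$. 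This is the upper-tail bound; applying the identical argument to the martingale $(-X_k)$, which has the same $M$ and the same $\sigma_i$, bounds $\Pr[X_n-X_0\le-\r]$ by the same quantity, and since $X_0=\ex X_0=\ex X_n$ for a martingale, a union bound gives the claimed two-sided inequality with its factor $2$.

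There is no genuine obstacle here — this is a standard argument — and the only step demanding a little care is the one-step exponential estimate, where one must invoke the right monotonicity (equivalently, Taylor-with-remainder) bound on $(e^x-1-x)/x^2$. Matching the exact denominator $M\r+S$ in the statement is free, since the argument above actually delivers the slightly stronger $S+M\r/3$.
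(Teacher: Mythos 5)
Your proof is correct. Note that the paper does not prove this statement at all: it quotes it directly from Theorems 6.1 and 6.5 of the Chung--Lu survey \cite{cl}, so there is no internal proof to compare against. Your argument is the standard exponential-moment (Bernstein/Freedman) method --- the one-step bound via the monotonicity of $(e^x-1-x)/x^2$, the exponential supermartingale, Markov's inequality, optimization of $\lambda$, and the reflection $X_k\mapsto -X_k$ for the lower tail --- which is essentially how the cited theorems are proved in \cite{cl}; indeed your computation gives the slightly sharper denominator $\sum\sigma_i^2+M\rho/3$, which dominates the stated bound. The only implicit convention worth making explicit is that $X$ means $X_n$ and that $X_0$ is deterministic (equivalently, the filtration starts trivially), so that $\Ex X = X_0$; this is the setting in which the theorem is stated and used in the paper.
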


\subsection{The Branching Process}\label{s24}
As in \cite{mr1}, we will examine our random graph using a branching process
of the type first applied to random graphs by Karp in \cite{rk}.
This time, we need to be much more careful,
since the branching parameter is $Q+1$ which can be $1+o(1)$.

Given a vertex $v$, we explore the graph ${\cal G}({\cal D})$
starting from $v$ in the following manner. At step $t$, we will have a partial subgraph $C_t$
which has been exposed so far.  Typically, there will be some vertices of $C_t$ whose
neighbours have not all been exposed; since we are working in the configuration model,
this is equivalent to saying that there are some vertex-copies of vertices in $C_t$
whose partners in the configuration have not been exposed.  We choose one of those vertex-copies
and expose its partner by selecting that partner uniformly at random from amongst all
vertex-copies that are still unmatched; if the partner is a vertex-copy of a vertex $u\notin C_t$,
then we add $u$ to $C_t$. This yields an edge of $C_t$. If all the vertex-copies of all vertices in $C_t$
are matched, then this indicates that we have exposed an entire component.  So we start
exploring a new component beginning with an arbitrary vertex.  Note that $C_t$ may contain several
components, but that all vertices with unmatched vertex-copies belong to the same component - the one
that is currently being explored.

We will use $Y_t$ to denote the total number of unmatched vertex-copies of vertices in $C_t$.
So $Y_t=0$ indicates that we have exposed an entire component and are about to start a new one.

\begin{enumerate}
\item Choose an arbitrary vertex $v$ and initialize $C_0=\{v\}$; $Y_{0}=\deg(v)$.
\item Repeat while there are any vertices not in $C_t$:
\begin{enumerate}
\item If $Y_t=0$, then pick a uniformly random vertex-copy from amongst all unmatched vertex-copies;
let $u$ denote the vertex of which it is a copy. $C_{t+1}:=C_t\cup\{u\}$; $Y_{t+1}:=\deg(u)$.
\item Else choose an arbitrary unmatched vertex-copy of any vertex $v\in C_t$.
Pick a uniformly random vertex-copy from amongst all other unmatched vertex-copies;
let $u$ denote the vertex of which it is a copy. Match these two vertex-copies; thus exposing
$uv$ as an edge of $C_{t+1}$.
\begin{enumerate}
\item If $u\notin C_t$ then $C_{t+1}:=C_t\cup\{u\}$; $Y_{t+1}:=Y_t+\deg(u)-2$.
\item Else $C_{t+1}:=C_t$; $Y_{t+1}:=Y_t-2$.
\end{enumerate}
\end{enumerate}
\end{enumerate}

Note that $C_t$ is a Markov process, and hence $Y_t$ depends only on
$C_{t-1}$, and not on the way that $C_{t-1}$ is exposed in the
branching process. For $t \ge 0$ let
\begin{itemize}
\item $\eta_{t+1} := Y_{t+1}-Y_{t}$.

\item $D_t:=Y_t+\sum_{u \not\in C_t}d_u$, the total number of unmatched vertex-copies remaining at time $t$.

\item $v_t:=\emptyset$ if $C_{t-1}$ and $C_t$ have the same vertex set, and if not, then $v_t$ is the unique
vertex in $C_t \setminus C_{t-1}$.

\item $Q_t:=\frac{\sum_{u \not\in C_t}d_u^2}{D_t - 1}-2$, and $R_t := \frac{4(Y_t-1)+\sum_{u \not\in C_t}d_u(d_u-2)^2}{D_t -1}$.
\end{itemize}

{\bf Remark:} Since every vertex has degree at least one, $D_t>0$ for every $t$ until the procedure halts.
Since the total number of vertex-copies is even, $D_t$ is even.  Therefore $D_t>1$ and so
$Q_t,R_t$ are well-defined.

Note that $Q_t$ and $R_t$ begin at $Q_0\approx Q$ and $R_0\approx R$. Furthermore,
for $u \not\in C_t$, $\Pr[v_{t+1}=u]=\frac{d_u}{D_t-1}$, and so if $Y_t>0$ then
the expected change in $Y_t$ is
\begin{equation}
\label{eq:Qt} \Ex[\eta_{t+1} | C_t] = (\sum_{u \not\in C_t}
\Pr[v_{t+1}=u]\times d_u)-2= \frac{\sum_{u \not\in C_t}d_u^2}{D_t - 1}-2=Q_t.
\end{equation}
If $Q_t$ remains approximately $Q$, then $Y_t$ is a random walk with drift approximately $Q$.
So if $Q<0$ then we expect $Y_t$ to keep returning to zero quickly, and hence we only discover
small components.  But if $Q>0$ then we expect $Y_t$ to grow large; i.e. we expect to discover a
large component.  This is the intuition behind the main result of \cite{mr1}.

The parameter $R_t$ measures the expected value of the square of the change in $Y_t$, if $Y_t>0$:
\begin{equation}
\label{eq:Rt} \Ex[\eta_{t+1}^2 | C_t] = \Pr[v_{t+1}=\emptyset]\times 4
+ \sum_{u \not\in C_t} \Pr[v_{t+1}=u] \times(d_u-2)^2=
\frac{4(Y_t-1)+\sum_{u \not\in C_t}d_u(d_u-2)^2}{D_t - 1}=R_t.
\end{equation}

If $Y_t=0$, then the
expected values of $\eta_{t+1}$ and $\eta_{t+1}^2$ are not equal to $Q_t,R_t$, as in this case
we have
\begin{equation}
\label{eq:etatFromZero} \Ex[\eta_{t+1} | C_t] =
\frac{\sum_{u \not\in C_t}d_u^2}{D_t},
\end{equation}
and, recalling from the above remark that $D_t>1$,
\begin{equation}
\label{eq:etat2FromZero}
\Ex[\eta_{t+1}^2 | C_t]= \frac{\sum_{u\not\in C_t}d_u^3}{D_t}\geq R_t\times\frac{D_t-1}{D_t}\geq\frac{R_t}{2}.
\end{equation}

Note that, for $Y_t>0$, the expected change in $Q_t$ is  approximately:
\[\Ex[Q_{t+1}-Q_t| C_t] \approx -\sum_{u \not\in C_t} \Pr[v_{t+1}=u] \times\frac{d_u^2}{D_t-1}
=-\frac{\sum_{u \not\in C_t}d_u^3}{(D_t - 1)^2}\]
which, as long as $D_t=n-o(n)$, is asymptotically of the same order as $-\frac{R_t}{n}$.
So if $R_t$ remains approximately $R$, then $Q_t$ will have a drift of roughly $-\frac{R}{n}$;
i.e. the branching factor will decrease at approximately that rate.
So amongst degree sequences with the same value of $Q$, we should expect those with large $R$
to have $|\cm|$ smaller. This explains why $|\cm|$ is a function of both $Q$ and $R$
in Theorem \ref{thm:UpperBound}.

Finally, note that since  $D_t$ decreases by at
most $2$ during any one step, we have
\begin{equation}
\label{eq:HalfEdges}
D_t \ge 2|E| - 2t.
\end{equation}

%


\section{Concentration of $Q_t$ and $R_t$}\label{scon}
In this section, we estimate the expected values of
$Q_t$ and $R_t$ and show that they are concentrated.
We begin with $R_t$.

\begin{lemma}
\label{lem:concR} For each
$1 \le t \le \frac{\z}{400}\frac{n}{\D}$,
$$\Pr[|R_t-R| \ge R/2] < n^{-10}.$$
\end{lemma}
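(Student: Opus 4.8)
The plan is to express $R_t - R$ as (an approximation to) a martingale increment process and apply the concentration inequality of Theorem~\ref{thm:MartingaleConc}. Recall $R_t = \frac{4(Y_t-1)+\sum_{u\not\in C_t}d_u(d_u-2)^2}{D_t-1}$ and $R = R_0 \approx \frac{\sum_u d_u(d_u-2)^2}{2|E|}$. First I would write $R_t$ as $N_t/(D_t-1)$ where $N_t := 4(Y_t-1) + \sum_{u\not\in C_t} d_u(d_u-2)^2$ is the numerator. Since $D_t \geq 2|E| - 2t \geq n - o(n)$ in the stated range of $t$ (using $t \le \frac{\z}{400}\frac{n}{\D}$ and $|E|\geq n/2$ from Lemma~\ref{lem:observations}(a)), and the denominator changes deterministically by at most $2$ per step, the main fluctuation comes from the numerator. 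So it suffices to show $N_t$ is concentrated around its expectation and that $\Ex[N_t]/(D_t-1)$ stays within, say, $R/3$ of $R$; then $|R_t - R| < R/2$ follows.

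The key steps: (1) Estimate $\Ex[N_{t+1} - N_t \mid C_t]$. When we expose an edge to a new vertex $u$, the sum $\sum_{u\not\in C_t}d_u(d_u-2)^2$ drops by $d_u(d_u-2)^2$ and $Y_t$ changes by $d_u - 2$ (contributing $4(d_u-2)$ to the $4(Y_t-1)$ term); when the edge closes a cycle, $Y_t$ drops by $2$. Using $\Pr[v_{t+1}=u] = d_u/(D_t-1)$, the expected drift of $N_t$ is on the order of $-\frac{1}{D_t}\sum_u d_u^2(d_u-2)^2 + O(\text{stuff})$, which is $O(\D \cdot R)$ in magnitude per step (crudely bounding $d_u(d_u-2)^2 \le \D^3$ and $\sum d_u (d_u-2)^2 = 2|E|R = O(nR)$, the per-step drift is $O(\D R)$). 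Over $t \le \frac{\z}{400}\frac{n}{\D}$ steps the accumulated drift is $O(\z n R) \cdot \frac{1}{n} = O(\z R)$ — wait, more carefully: $t \cdot \D R / n \le \frac{\z}{400} R$, small. So $\Ex[N_t] = N_0(1 + o(1)) \pm \frac{\z}{400}(\ldots)$ stays close to $2|E|R$, giving $\Ex[R_t]$ within $R/4$ of $R$. (2) Define $Z_i := N_i - \Ex[N_i \mid C_{i-1}] $-telescoped into a martingale $M_t = \sum_{i=1}^t Z_i$ (i.e. $M_t = N_t - \sum_{i=1}^{t}\Ex[N_i - N_{i-1}\mid C_{i-1}] - N_0$), so that $N_t$ concentrates iff $M_t$ does. (3) Bound the one-step change $|M_i - M_{i-1}| \le |N_i - N_{i-1}| + |\Ex[\cdot]| = O(\D^3)$, since the numerator term $d_u(d_u-2)^2 \le \D^3$ dominates and $|4(Y_i - Y_{i-1})| = O(\D)$. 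Set $M := c\D^3$. (4) Bound the conditional variance: $\Var(M_i \mid C_{i-1}) \le \Ex[(N_i - N_{i-1})^2 \mid C_{i-1}] \le \sum_u \frac{d_u}{D_{i-1}} (d_u(d_u-2)^2 + O(\D))^2 = O\!\big(\frac{\D^3}{n}\sum_u d_u(d_u-2)^2\big) = O(\D^3 R)$, using $\sum d_u(d_u-2)^2 = 2|E|R$ and $D_{i-1} = \Theta(n)$. So $\sigma_i^2 = O(\D^3 R)$ and $\sum_{i=1}^t \sigma_i^2 = O(t \D^3 R) = O(\z n \D^2 R)$. (5) Apply Theorem~\ref{thm:MartingaleConc} with $\r := R(D_t-1)/4 = \Theta(nR)$ (the deviation of $N_t$ corresponding to $R/4$ deviation of $R_t$): the exponent is $\frac{\r^2}{2(M\r + \sum\sigma_i^2)}$. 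Here $M\r = O(\D^3 \cdot nR)$ and $\sum \sigma_i^2 = O(\z n \D^2 R)$, while $\r^2 = \Theta(n^2 R^2)$, so the exponent is $\Omega\!\big(\frac{n^2R^2}{\D^3 n R}\big) = \Omega\!\big(\frac{nR}{\D^3}\big)$. By Condition~D(a), $\D \le n^{1/3}R^{1/3}(\ln n)^{-1}$, so $\D^3 \le nR(\ln n)^{-3}$, hence $\frac{nR}{\D^3} \ge (\ln n)^3 \gg 10\ln n$, giving a failure probability $\le 2e^{-\Omega((\ln n)^3)} < n^{-10}$.

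The main obstacle is bookkeeping the drift term $\sum_{i=1}^t \Ex[N_i - N_{i-1}\mid C_{i-1}]$ carefully enough to confirm $\Ex[R_t]$ stays within $R/3$ (or whatever slack is left after the concentration half takes $R/4$); this requires tracking that $Y_t$ itself cannot have grown too large in this regime — but since $|Y_i - Y_{i-1}| \le \D$ and $i \le \frac{\z}{400}\frac{n}{\D}$, trivially $Y_i \le \Delta + i\D = O(\z n)$, and in fact the $4(Y_t-1)$ term contributes at most $O(\z n)$ to $N_t$ while $N_0 = \Theta(nR) \ge \Theta(\z n)$ by Lemma~\ref{lem:observations}(b), so this term is lower-order and harmless. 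One must also be slightly careful that the process might pass through steps where $Y_t = 0$ (starting a new component), where the formula for $\Ex[\eta_{t+1}]$ differs as in (\ref{eq:etatFromZero}); but the change to $N_t$ at such a step is still $O(\D^3)$ and there is no $d_u - 2$ cycle-closing case, so the same bounds apply. Everything else is routine substitution of the bounds from Lemma~\ref{lem:observations} and Condition~D.
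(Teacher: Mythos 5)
Your proposal is correct and follows essentially the same route as the paper: bound the per-step drift, the maximum increment ($O(\Delta^3)$) and the conditional variance, then apply Theorem~\ref{thm:MartingaleConc} and use Condition~D(a), i.e. $\Delta^3\le nR(\ln n)^{-3}$, to get an exponent of order $(\ln n)^3$ and hence probability below $n^{-10}$. The only difference is cosmetic: you clear the denominator and concentrate the numerator $N_t$ (handling $D_t-1$ via the deterministic bounds $2|E|-2t\le D_t\le 2|E|$), whereas the paper works with $R_t$ directly through the same-denominator auxiliary quantity $\tilde R_t$; the parameter scales and the final estimate are the same.
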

\begin{proof} It would be convenient if $R_t,R_{t-1}$ had the same denominator. So
for $t \ge 1$ we define
$$\tilde{R}_{t}:= \frac{4(Y_{t}-1) + \sum_{u \not\in C_{t}} d_u (d_u-2)^2}{D_{t-1}-1}=R_t \frac{D_{t}-1}{D_{t-1}-1}.$$
Note that $|D_t-D_{t-1}| \le 2$ and $Y_t \le \Delta t$, and that for $t \le \frac{\z}{400}\frac{n}{\D}<\hf|E|$,
we have $D_t,D_{t-1}>|E|$ by (\ref{eq:HalfEdges}).
Hence, applying Lemma \ref{lem:observations}, we obtain that for sufficiently large $n$,
\begin{eqnarray}
\nonumber |R_t-\tilde{R}_t|&=&(4(Y_t-1) +
\sum_{u \not\in C_t} d_u (d_u-2)^2)\left|\inv{D_{t}-1}-\inv{D_{t-1}-1}\right|\\
\nonumber &=&(4(Y_t-1) + \sum_{u \not\in C_t} d_u (d_u-2)^2)\frac{\left|D_{t}-D_{t-1}\right|}
{(D_{t}-1)(D_{t-1}-1)}\\
\label{eq:RTildeDiff}
&\le& 2\frac{4(Y_t-1) +
\sum_{u \not\in C_t} d_u (d_u-2)^2}{|E|^2}
\le \frac{8\Delta\times \frac{\z}{400}\frac{n}{\D}}{|E|^2}+\frac{4R}{|E|}
< \frac{5R}{|E|}.
\end{eqnarray}
Using again the fact that $D_{t-1}-1\geq |E|$, we have for $n$
sufficiently large:
\begin{eqnarray*}
\left| \Ex\left[\tilde{R}_t-R_{t-1}|C_{t-1}\right] \right|&=&
\left|\Ex\left[\frac{4(Y_t-Y_{t-1})}{D_{t-1}-1}|C_{t-1}\right] -
\sum_{u \not\in C_{t-1}} \Pr[v_t=u] \frac{d_u(d_u-2)^2}{D_{t-1}-1}\right| \\
&\le& \frac{4\Delta}{|E|} +  \sum_{u \not\in C_{t-1}}
\frac{d_u^2(d_u-2)^2}{(D_{t-1}-1)^2} \le \frac{4\Delta}{|E|} +
\frac{2\Delta R}{|E|} < \frac{20}{\z}\frac{\Delta R}{|E|},
\end{eqnarray*}
which together with (\ref{eq:RTildeDiff}) shows
\begin{equation}
\label{eq:expectedChangeRt} \left|
\Ex\left[R_t-R_{t-1}|C_{t-1}\right] \right|
\le \frac{20}{\z}\frac{\Delta R}{|E|} + \frac{5R}{|E|}
\le \frac{40}{\z}\frac{\Delta R}{|E|}.
\end{equation}
Using the fact that $(a+b)^2\leq 2a^2+2b^2$ we similarly obtain:
\begin{eqnarray*}
\Ex\left[\left| \tilde{R}_t-R_{t-1}\right|^2 |C_{t-1}\right]
&\le& 2\Ex\left[\left(\frac{4(Y_t-Y_{t-1})}{D_{t-1}-1}\right)^2|C_{t-1}\right]
+ 2\sum_{u \not\in C_{t-1}} \Pr[v_t=u]
\left(\frac{d_u(d_u-2)^2}{D_{t-1}-1}\right)^2
\\
&\le& 2\frac{16\Delta^2}{|E|^2} +  2 \sum_{u \not\in C_{t-1}} \frac{d_u^3(d_u-2)^4}{(D_{t-1}-1)^3}
< \frac{32\Delta^2}{|E|^2} + \frac{4\Delta^4 R}{|E|^2}
<\frac{150}{\z}\frac{\Delta^4 R}{|E|^2}.
\end{eqnarray*}
We conclude from this, (\ref{eq:RTildeDiff}) and
Lemma~\ref{lem:observations}(b) that
\begin{eqnarray}
\nonumber \Var[R_t|C_{t-1}] &\le&
\Ex\left[|R_t-R_{t-1}|^2|C_{t-1}\right] \le
2\Ex\left[|\tilde{R}_t-R_{t-1}|^2|C_{t-1}\right] + 2
\Ex\left[|R_t-\tilde{R}_t|^2|C_{t-1}\right] \\
\label{eq:RboundDiffSquare}  &\le& 2\frac{150}{\z}\frac{\Delta^4 R}{|E|^2}
+ 2(5)^2\frac{R^2}{|E|^2}
\le \frac{300}{\z}\frac{\Delta^4 R}{|E|^2}
+100\frac{R \Delta}{|E|^2} < \frac{500}{\z}\frac{\Delta^4 R}{|E|^2}.
\end{eqnarray}
Note that by (\ref{eq:expectedChangeRt}) and the bound $\D\leq n^{1/3}R^{1/3}/\ln n$, for $0 \le t \le \frac{\z}{400}\frac{n}{\D}$ and $n$ sufficiently large, we have:
\begin{equation}
\label{eq:expectedRt} |R-\Ex R_t| \le |R-R_0| + |R_0-\Ex R_t| \le
\frac{4\Delta+\Delta(\Delta-2)^2}{2|E|}+t\times\frac{40}{\z}\frac{\Delta R}{|E|}
< R/4.
\end{equation}
Applying (\ref{eq:RTildeDiff}), we have
\begin{equation}
\label{eq:boundMaxChangeR} |R_t-R_{t-1}| \le |R_t-\tilde{R}_t| +
|\tilde{R}_t-R_{t-1}| \le \frac{5 R}{|E|}+
\frac{4\Delta + \Delta(\Delta-2)^2}{D_{t-1}-1} < \frac{10 \Delta^3}{|E|}.
\end{equation}
Now by (\ref{eq:RboundDiffSquare}), (\ref{eq:expectedRt}), (\ref{eq:boundMaxChangeR}), and
Theorem~\ref{thm:MartingaleConc} and the bound $\D\leq n^{1/3}R^{1/3}/\ln n$:
\begin{eqnarray*}
\Pr\left[|R_t -R| \ge \frac{R}{2}\right] &\le& \Pr\left[|R_t -\Ex
R_t| \ge \frac{R}{4}\right] \le
e^{-\frac{(R/4)^2}{2\left(\frac{R}{4} \frac{10\Delta^3}{|E|} + t\frac{500}{\z}
\frac{\Delta^4 R}{|E|^2}\right)}} <
e^{-\frac{R|E|}{200\Delta^3 }} \\
&<& e^{-(\ln n)^3}<n^{-10},
\end{eqnarray*}
for sufficiently large $n$.
\end{proof}


Next we turn to $Q_t$:

\begin{lemma}
\label{lem:concQ} For each  $1 \le t \le \frac{\z}{1000}\frac{|Q|n}{R}+2n^{2/3}R^{-1/3}$,
\[\Pr\left[|Q_t-Q|>\hf |Q|+ \frac{800}{\z}n^{-1/3} R^{2/3}\right] \le n^{-10}.\]
\end{lemma}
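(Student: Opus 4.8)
The plan is to mirror the martingale argument used for $R_t$ in Lemma~\ref{lem:concR}, but now tracking $Q_t$, whose drift is no longer negligible: from the computation following (\ref{eq:Rt}) we expect $Q_t$ to decrease at rate about $R_t/n$, so over $t$ steps the expected displacement of $Q_t$ from $Q$ can be of order $t R/n$. First I would introduce the auxiliary quantity $\tilde Q_t := \frac{\sum_{u\notin C_t} d_u^2}{D_{t-1}-1}-2 = Q_t\frac{D_t-1}{D_{t-1}-1}$ so that consecutive terms share a denominator, and bound $|Q_t-\tilde Q_t|$ using $|D_t-D_{t-1}|\le 2$, the bound $D_{t-1},D_t>|E|$ valid in our range of $t$ (via (\ref{eq:HalfEdges})), and $\sum_{u\notin C_t}d_u^2 = (4+2Q)|E|-\sum_{u\in C_t}d_u^2 \le O(|E|)$ from Lemma~\ref{lem:observations}(a); this gives $|Q_t-\tilde Q_t| = O(1/|E|)$, which is negligible compared with $n^{-1/3}R^{2/3}$ since $R\ge \z/4$.

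Next I would estimate the one-step conditional expectation. When $Y_{t-1}>0$, removing vertex $v_t=u$ (chosen with probability $d_u/(D_{t-1}-1)$) changes the numerator of $\tilde Q_t$ by $-d_u^2$ and so $\Ex[\tilde Q_t - Q_{t-1}\mid C_{t-1}] = -\sum_{u\notin C_{t-1}} \frac{d_u^2}{D_{t-1}-1}\cdot\frac{d_u}{D_{t-1}-1} = -\frac{\sum_{u\notin C_{t-1}}d_u^3}{(D_{t-1}-1)^2}$, which in absolute value is at most $\frac{\D\sum_{u\notin C_{t-1}}d_u^2}{|E|^2} = O(\D R/ (R|E|)) $; more usefully, $\sum d_u^3 \le \D\cdot(4+2Q)|E|$ and $\D \le n^{1/3}R^{1/3}/\ln n$, $|E|\ge n/2$ give a bound of the form $O(R_{t-1}/n)$ up to the $\D$-factor — the cleanest estimate is $|\Ex[Q_t-Q_{t-1}\mid C_{t-1}]| \le \frac{c\,\D R}{n^2}\cdot n \le \frac{c'\D}{n}$ wait; I would simply record $|\Ex[Q_t-Q_{t-1}\mid C_{t-1}]| = O\!\left(\frac{\D}{n}\right)$ and sum over $t \le \frac{\z}{1000}\frac{|Q|n}{R}+2n^{2/3}R^{-1/3}$. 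For the second term, $t\D/n \le 2n^{-1/3}R^{1/3}\D/n \le 2n^{-2/3}R^{2/3}/\ln n$, negligible; for the first term, $\frac{\z}{1000}\frac{|Q|n}{R}\cdot\frac{\D}{n} = \frac{\z}{1000}\frac{|Q|\D}{R} \le \frac{\z}{1000}\frac{|Q| n^{1/3}R^{1/3}}{R\ln n}$, and since $|Q|\le \z/2$ and — here one uses that in the regime of interest $|Q|$ is not too large — this is $o(|Q|)$, absorbed into the $\hf|Q|$ slack. So $|Q-\Ex Q_t| \le |Q-Q_0| + \sum_{s\le t}|\Ex[Q_s-Q_{s-1}\mid C_{s-1}]| \le \hf|Q| + O(n^{-1/3}R^{2/3})$, with room to spare against the stated $\frac{800}{\z}n^{-1/3}R^{2/3}$.

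Then I would bound the conditional variance: $\Var[\tilde Q_t\mid C_{t-1}] \le \sum_{u\notin C_{t-1}} \frac{d_u}{D_{t-1}-1}\left(\frac{d_u^2}{D_{t-1}-1}\right)^2 = \frac{\sum_{u\notin C_{t-1}}d_u^5}{(D_{t-1}-1)^3} \le \frac{\D^3 \sum d_u^2}{|E|^3} \le \frac{c\,\D^3}{|E|^2}$, and combining with the $O(1/|E|^2)$ contribution from $|Q_t-\tilde Q_t|$ via $(a+b)^2\le 2a^2+2b^2$ as in (\ref{eq:RboundDiffSquare}), $\Var[Q_t\mid C_{t-1}] \le \sigma^2 := \frac{c'\D^3}{|E|^2}$. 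For the martingale increments I would use $|Q_t-Q_{t-1}| \le |Q_t-\tilde Q_t| + |\tilde Q_t - Q_{t-1}| \le O(1/|E|) + \frac{\D^2}{|E|} \le \frac{2\D^2}{|E|} =: M$. Now apply Theorem~\ref{thm:MartingaleConc} to the martingale $Q_t - \Ex Q_t$ (for fixed $t$, or rather to $Q_{\min(i,t)}$) with $\r = \frac{400}{\z}n^{-1/3}R^{2/3}$ (half of the slack beyond $\hf|Q|$): the exponent is $-\frac{\r^2}{2(M\r + t\sigma^2)}$. One checks $t\sigma^2 \le \left(\frac{\z|Q|n}{1000R}+2n^{2/3}R^{-1/3}\right)\frac{c'\D^3}{|E|^2} = O\!\left(\frac{\D^3 n^{2/3}R^{-1/3}}{n^2}\right) = O\!\left(\frac{R n^{2/3}R^{-1/3}}{n (\ln n)^3}\right) = O\!\left(\frac{R^{2/3}}{n^{1/3}(\ln n)^3}\right)$ (using $|Q|/R = O(1/R)=O(1)$ and $\D^3 \le nR/(\ln n)^3$), while $\r^2 = \Theta(n^{-2/3}R^{4/3})$, so $\r^2/(t\sigma^2) \ge c''(\ln n)^3 R^{2/3} \ge c'''(\ln n)^3$ since $R\ge\z/4$; and $M\r = O(\D^2 n^{-1/3}R^{2/3}/|E|) = O(\D^2 R^{2/3}/(n^{4/3}))$ is of the same or smaller order than $t\sigma^2$. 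Hence the exponent is $\le -(\ln n)^3/2 \le -10\ln n$ for $n$ large, giving probability $\le n^{-10}$, as claimed.

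The main obstacle I anticipate is controlling the drift term in the first regime $t \approx \frac{\z}{1000}\frac{|Q|n}{R}$: one must verify that $\sum_{s\le t}|\Ex[Q_s-Q_{s-1}\mid C_{s-1}]|$ genuinely stays below $\hf|Q|$ rather than merely $O(n^{-1/3}R^{2/3})$ — this hinges on the constant $\frac{1}{1000}$ being small enough relative to the $\hf$ slack after the $\D$ and $|E|$ bounds are inserted, and on the fact that $|Q|\le\z/2$ keeps $|Q|\D/R$ genuinely $o(|Q|)$ given $\D\le n^{1/3}R^{1/3}/\ln n$ and $R\ge\z/4$ (so $\D/R \le n^{1/3}R^{-2/3}/\ln n$, and multiplying by $|Q|$ one uses $|Q|\le\la n^{-1/3}R^{2/3}$ in the relevant theorem, or simply $o(1)$ from the $(\ln n)^{-1}$). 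A secondary bookkeeping point is that the cases $Y_{s-1}=0$ (start of a new component) must be handled separately using (\ref{eq:etatFromZero}); but there are at most $O(t)$ such steps and each contributes an $O(\D/|E|)$ term to the drift and is already covered by the same bounds, so it does not change the conclusion.
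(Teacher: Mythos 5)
Your overall architecture is the same as the paper's (the same-denominator trick with $\tilde Q_t$, a drift estimate plus a conditional-variance estimate plus the bounded-difference bound, then Theorem~\ref{thm:MartingaleConc}), but the quantitative bounds you propose for the drift and the variance are too lossy, and this is a genuine gap, not bookkeeping. Bounding $\sum_{u\notin C_{t-1}}d_u^3\le \D\sum_u d_u^2=O(\D|E|)$ gives a per-step drift of order $\D/n$, which over $t\approx 2n^{2/3}R^{-1/3}$ steps accumulates to order $\D n^{-1/3}R^{-1/3}\le 1/\ln n$ (your intermediate claim $t\D/n\le 2n^{-2/3}R^{2/3}/\ln n$ is an arithmetic slip). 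When $R=\Theta(1)$ and $\D$ is near its allowed maximum $n^{1/3}R^{1/3}/\ln n$, this accumulated drift of order $1/\ln n$ dwarfs the permitted deviation $\hf|Q|+\frac{800}{\z}n^{-1/3}R^{2/3}=O(n^{-1/3})$, and none of your proposed rescues work: $\D/R$ is not $o(1)$ (it can be $n^{1/3}/\ln n$), and you cannot assume $|Q|\le\la n^{-1/3}R^{2/3}$ since the lemma is invoked with time horizon $\frac{\z}{1000}\frac{|Q|n}{R}$ precisely in the regimes of Theorems~\ref{t2} and~\ref{t3} where $|Q|$ exceeds that. The same loss infects your variance bound: $\sum d_u^5\le\D^3\sum d_u^2$ gives $\sigma^2=O(\D^3/n^2)$, and then $t\sigma^2\approx n^{-1/3}R^{2/3}(\ln n)^{-3}$ against $\r^2\approx n^{-2/3}R^{4/3}$, so the exponent is $O\bigl(n^{-1/3}R^{2/3}(\ln n)^3\bigr)\to 0$ for bounded $R$; your check that it is $\ge c(\ln n)^3$ silently drops an $n^{-1/3}$ factor.

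The missing idea is that both moments must be controlled by $R$ rather than by $\D$: since $d_u^3\le 9\,d_u(d_u-2)^2$ for $d_u\neq 2$ and $d_u^3=8$ for $d_u=2$, one has $\sum_{u\notin C_{t-1}}d_u^3\le 18R|E|+8n=O(R|E|/\z)$, so the per-step drift is at least $-O\!\left(\frac{R}{\z|E|}\right)$; moreover the drift of $Q_t$ is nonpositive up to an $O(1/|E|)$ correction, so the upper deviation of $\Ex Q_t$ needs no drift cancellation at all. With these, over $t\le\frac{\z}{1000}\frac{|Q|n}{R}+2n^{2/3}R^{-1/3}$ steps the accumulated drift is at most $\frac{|Q|}{4}+O\!\left(\frac{1}{\z}n^{-1/3}R^{2/3}\right)$ --- this is exactly why the time horizon is phrased in terms of $n/R$. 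Similarly $\sum d_u^5\le\D^2\sum d_u^3=O(\D^2R|E|/\z)$ gives $\sigma^2=O(\D^2R/(\z n^2))\le O\!\left(n^{-4/3}R^{5/3}(\ln n)^{-2}/\z\right)$, which is smaller than your bound by a factor of order $\D/R$ and is what makes the exponent in Theorem~\ref{thm:MartingaleConc} of order $(\ln n)^2$, uniformly in $R$. Without replacing your two moment bounds by these $R$-based ones, the proof does not close.
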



\begin{proof}
Again, we make the denominators the same by setting
$$\tilde{Q}_{t}:= \frac{\sum_{u \not\in C_t} d_u^2}{D_{t-1}-1}-2.$$
%
Using the same argument as for (\ref{eq:RTildeDiff}), and applying Lemma \ref{lem:observations}(a), we obtain that for sufficiently
large $n$,
\begin{equation}
\label{eq:diffQQtilde} |Q_t-\tilde{Q}_t|
\le 2\frac{\sum_{u \not\in C_t} d_u^2}{|E|^2} \le \frac{4|Q|+8}{|E|} < \frac{10}{|E|}.
\end{equation}
Trivially
\begin{equation}
\label{eq:boundMaxChange} |Q_t-Q_{t-1}| \le
|Q_t-\tilde{Q}_t|+|\tilde{Q}_t-Q_{t-1}|\le
\frac{\Delta^2}{|E|}+\frac{10}{|E|} < \frac{20 \Delta^2}{|E|}.
\end{equation}
In what follows, we use the facts that $2|E|\geq D_{t-1}-1\geq 2|E|-2t+1>|E|$ and $D_{t-1}\geq Y_{t-1}$.
Note that whether  $Y_{t-1}>0$ or $Y_{t-1}=0$, for sufficiently
large $n$, by Lemma~\ref{lem:observations}(a) we always have
\begin{eqnarray*}
\Ex\left[\tilde{Q}_t-Q_{t-1}|C_{t-1}\right]
&=& - \sum_{u \not\in C_{t-1}} \Pr[v_t=u] \frac{d_u^2}{D_{t-1}-1}
= - \sum_{u \not\in C_{t-1}} \frac{d_u^3}{(D_{t-1}-1)^2}  \\
&<& -  \frac{4(Y_{t-1}-1) +\sum_{u \not\in C_{t-1}} d_u(d_u-2)^2-4(Y_{t-1}-1)}{(D_{t-1}-1)^2}
\\
&\le& -\frac{R_{t-1}-4}{D_{t-1}-1} \le -\frac{R_{t-1}}{2|E|} +\frac{4}{|E|}.
\end{eqnarray*}
Combining this with (\ref{eq:diffQQtilde}) we have
\begin{equation}
\label{eq:diffQ} \Ex\left[Q_t-Q_{t-1}|C_{t-1}\right] \le
-\frac{R_{t-1}}{2|E|}  + \frac{4}{|E|}+\frac{10}{|E|}
< -\frac{R_{t-1}-30}{2|E|}.
\end{equation}
For $n$, and hence $|E|$, sufficiently large:
\[Q_0-Q=\frac{d_v^2}{2|E|-1}+\sum_{u\in G}d_u^2\left(\frac{1}{2|E|-1}-\frac{1}{2|E|}\right)
=\frac{d_v^2}{2|E|-1}+\frac{(Q+2)}{2|E|-1}<\frac{\D^2}{|E|}.\]
Now (\ref{eq:expectedRt}), (\ref{eq:diffQ}),
Lemma~\ref{lem:observations}(a,b) and the bound $\D\leq n^{1/3}R^{1/3}/\ln n$ imply that for
$t \le \frac{\z}{1000}\frac{|Q|n}{R}+2n^{2/3}R^{-1/3}$,
\begin{eqnarray}
\nonumber \Ex[Q_t-Q] &\le& |Q-Q_0|+ \Ex[Q_t-Q_0] \le
\frac{\Delta^2}{|E|}+\left(\frac{30 t}{2|E|}
- \sum_{j=0}^{t-1} \Ex \frac{R_j}{2|E|}\right)\leq \frac{\Delta^2}{|E|}+\frac{30 t}{2|E|}\\
\label{eq:ExQtAbove} &\le&
2n^{-1/3}R^{2/3}(\ln n)^{-2}+\frac{|Q|}{4}+60n^{-1/3}R^{-1/3}
\leq \frac{|Q|}{4}+\frac{400}{\z}n^{-1/3}R^{2/3}.
\end{eqnarray}
Furthermore
\begin{eqnarray*}
\Ex\left[\tilde{Q}_t-Q_{t-1}|C_{t-1}\right]
&=& - \sum_{u \not\in C_{t-1}} \frac{d_u^3}{(D_{t-1}-1)^2}  \\
&\ge& -\frac{ 9\sum_{u\not\in C_{t-1}} d_u(d_u-2)^2
+ \sum_{u:d_u=2} 8}{|E|^2} \\
&>&  -\frac{18 R + 8}{|E|} \geq -\frac{50}{\z}\frac{R}{|E|},
\end{eqnarray*}
which together with (\ref{eq:diffQQtilde}) shows that
$$\Ex\left[Q_t-Q_{t-1}|C_{t-1}\right] \ge -\frac{50}{\z}\frac{R}{|E|} - \frac{10}{|E|}
\geq -\frac{90}{\z}\frac{R}{|E|},$$
and hence for $1 \le t \le \frac{\z}{1000}\frac{|Q|n}{R}+2n^{2/3}R^{-1/3}$, using
the bound $\D\leq n^{1/3}R^{1/3}/\ln n$,
\begin{equation}
\label{eq:ExQtBelow} \Ex[Q_t-Q] \ge \Ex[Q_t-Q_0] - |Q-Q_0|
\ge - \frac{90}{\z}\frac{t R}{|E|} - \Delta^2 n^{-1}
> -\frac{|Q|}{4}-\frac{400}{\z}n^{-1/3}R^{2/3}.
\end{equation}
Using similar arguments we obtain
\begin{eqnarray*}
\Ex\left[|\tilde{Q}_t-Q_{t-1}|^2|C_{t-1}\right] &=& \sum_{u \not\in
C_{t-1}} \Pr[v_t=u] \frac{d_u^4}{(D_{t-1}-1)^2} = \sum_{u \not\in
C_{t-1}}  \frac{d_u^5}{(D_{t-1}-1)^3}  \\ &\le& \Delta^2 \sum_{u
\not\in C_{t-1}}  \frac{d_u^3}{|E|^3} \le
\frac{\Delta^2}{|E|^2}\frac{ 9\sum_{u \not\in C_{t-1}} d_u(d_u-2)^2
+ \sum_{u:d_u=2} 8}{|E|} \\
&\leq& \frac{50}{\z}\frac{\Delta^2 R}{|E|^2}  \le \frac{200}{\z}\frac{\Delta^2 R}{n^2}.
\end{eqnarray*}

As in (\ref{eq:RboundDiffSquare}), this, (\ref{eq:diffQQtilde}) and
Lemma~\ref{lem:observations}(b) yield
\begin{eqnarray}
\nonumber \Var[Q_t|C_{t-1}] &\le&
\Ex\left[|Q_t-Q_{t-1}|^2|C_{t-1}\right] \le
2\Ex\left[|\tilde{Q}_t-Q_{t-1}|^2|C_{t-1}\right] + 2
\Ex\left[|Q_t-\tilde{Q}_t|^2|C_{t-1}\right] \\
\label{eq:QboundDiffSquare}
&\le& 2 \frac{200}{\z}\frac{\Delta^2 R}{n^2}
+ 2\frac{100}{|E|^2}<\frac{2000}{\z}\frac{\Delta^2 R}{n^2}.
\end{eqnarray}

By (\ref{eq:ExQtAbove}), (\ref{eq:ExQtBelow}) and (\ref{eq:boundMaxChange}), we can apply
Theorem~\ref{thm:MartingaleConc} with
$\r= \frac{1}{4}|Q|+\frac{400}{\z}n^{-1/3} R^{2/3}$
and $M=\frac{20 \Delta^2}{|E|}<40 n^{-1/3} R^{2/3}(\ln n)^{-2}$.
Similarly, (\ref{eq:QboundDiffSquare}) allows us to take $\s_i^2=\frac{2000}{\z}\frac{\Delta^2 R}{n^2}
<\frac{2000}{\z} n^{-4/3} R^{5/3}(\ln n)^{-2}$.
This yields:
\begin{eqnarray*}
\Pr\left[|Q_t-Q| \ge\hf |Q|+ \frac{800}{\z}n^{-1/3} R^{2/3}\right]
&\le& 2 e^{-\frac{(\frac{1}{4}|Q|+\frac{400}{\z}n^{-1/3} R^{2/3})^2}
{2\left(40 n^{-1/3} R^{2/3} \times
(\frac{1}{4}|Q|+\frac{400}{\z}n^{-1/3} R^{2/3})
+ t \times \frac{2000}{\z} n^{-4/3} R^{5/3}\right)(\ln n)^{-2}}}\\
&\leq& 2 e^{-\frac{(\frac{1}{4}|Q|+\frac{400}{\z}n^{-1/3} R^{2/3})^2(\ln n)^2}
{80 n^{-1/3} R^{2/3} \times
(\frac{1}{4}|Q|+\frac{400}{\z}n^{-1/3} R^{2/3})
+ 4|Q|n^{-1/3}R^{2/3}+\frac{8000}{\z} n^{-2/3} R^{4/3}}}.
\end{eqnarray*}
To bound this, note that:
\[\frac{(\frac{1}{4}|Q|+\frac{400}{\z}n^{-1/3} R^{2/3})^2}
{80 n^{-1/3} R^{2/3} \times
(\frac{1}{4}|Q|+\frac{400}{\z}n^{-1/3} R^{2/3})}=\frac{\frac{1}{4}|Q|+\frac{400}{\z}n^{-1/3} R^{2/3}}
{80 n^{-1/3} R^{2/3}}\geq\frac{5}{\z}.\]
\[\frac{(\frac{1}{4}|Q|+\frac{400}{\z}n^{-1/3} R^{2/3})^2}{4|Q|n^{-1/3}R^{2/3}}
\geq\frac{\frac{400}{\z}n^{-1/3} R^{2/3}}{16n^{-1/3}R^{2/3}}=\frac{25}{\z}.\]
\[\frac{(\frac{1}{4}|Q|+\frac{400}{\z}n^{-1/3} R^{2/3})^2}{\frac{8000}{\z} n^{-2/3} R^{1/3}}
\geq\frac{(n^{-1/3} R^{2/3})^2}{\z n^{-2/3} R^{4/3}}=\frac{1}{\z}.\]
These yield:
\[\Pr\left[|Q_t-Q| \ge\hf |Q|+ \frac{800}{\z}n^{-1/3} R^{2/3}\right]
\le 2 e^{-\frac{(\ln n)^2}{3\z}}<n^{-10},\]
for $n$ sufficiently large.
\end{proof}

\section{Proof of Theorem \ref{t2}}
We start by analyzing the subcritical phase; i.e. when $Q<-\omega(n) n^{-1/3}R^{2/3}$, where $\omega(n)$ grows with $n$.

First we show that with high probability, there are no components of size greater than $O(\sqrt{n/|Q|})$.
The proof will be a simple application of the Optional Stopping Theorem.

{\bf Proof of Theorem \ref{t2}(a).}  Fix any $\e>0$ and set $B=\frac{4}{\sqrt{\e}}$ and
$T=B\sqrt{n/|Q|}$.
For a given vertex $v$, we will bound the probability
that $v$ lies in a component of size at least $T$ by analyzing the branching process beginning at vertex $v$
and bounding the  probability that $Y_t$ does not return to zero before time $T$.

Note that for $n$ sufficiently large, $T<n^{2/3}R^{-1/3}$.
So Lemma \ref{lem:concQ}, implies that, with high probability,
\[Q_t\leq\hf Q+\frac{800}{\z}n^{1/3}R^{2/3}<\inv{4}Q,\]
for every $t\leq T$.
We define the stopping time
\[\g:= \min \{t: \mbox{$(Y_t=0)$, $(Q_{t}>\inv{4}Q)$ or $(t=T)$}\}.\]
Lemma \ref{lem:concQ} will show us that, with high probability, we will not
have $Q_{\g}>\inv{4}Q$.  So by upper bounding $\pr(\g=T)$, we can
obtain a good lower bound on $\pr(Y_t=0)$ which, in turn, is a lower bound on
the probability that $Y_t$ reaches zero before
time $T$.

For $t \le \gamma$, we have $Q_{t-1}\leq\inv{4}Q$. We also have $Y_{t-1}>0$ and so $\ex(\eta_t)$ is as in (\ref{eq:Qt}). Therefore we have:
\[\ex(Y_t-Y_{t-1})=Q_{t-1}\leq\inv{4} Q,\]
and so $Y_{\min(t,\g)}-\inv{4} Q\min(t,\g)$ is a supermartingale.  Applying the Optional Stopping Theorem
to $Y_{\min(t,\g)}-\inv{4} Q\min(t,\g)$ with stopping times $\t:=\g$
and bound $T:=T$ yields that
\[\ex(Y_{\g}-\inv{4} Q\g)\leq Y_0=d_v.\]
Since $Q<0$, this implies:
\[\ex(\g)\leq\frac{4(d_v-\ex(Y_{\g}))}{|Q|}\leq\frac{4d_v}{|Q|},\]
and so $\pr(\g=T)\leq\frac{4d_v}{|Q|T}$.  By Lemma \ref{lem:concQ}, $\pr(Q_{\g}>\inv{4}Q)<n^{-10}$
and so:
\[\pr(Y_{\g}\neq 0)\leq\frac{4d_v}{|Q|T}+n^{-10}<\frac{5d_v}{|Q|T},\]
for $n$ sufficiently large.

Let $Z$ be the number of vertices lying in components of size at least $T$.
Recalling that $\sum_v d_v=2|E|<3n$ by Lemma~\ref{lem:observations}(a), we have
\begin{eqnarray*}
\Pr[|\cm| \ge T] &\le& \Pr[Z \ge T]  \le  \frac{\Ex[Z]}{T}
\le \frac{1}{T} \sum_{v \in V} \Pr[{\cal C}_v \ge T] \\
&\le& \frac{1}{T} \sum_{v \in V}\frac{5d_v}{|Q|T}<\frac{16n}{|Q|T^2}=\frac{16}{B^2}=\e.
\end{eqnarray*}

This proves that Theorem~\ref{t2}(a) holds for
a random configuration.  Proposition \ref{pcm} implies that it holds for a random graph.
\proofend

Next we show that the random graph will, with probability
at least $\e$, have no components with at least two cycles.
The following helpful fact bounds the probability that specific pairs of vertex-copies
are joined in our random configuration:

\begin{proposition}\label{pc}
Specify any $\ell$ pairs of vertex-copies.  The probability that those pairs are joined
is at most  $\frac{(|E|-1-\ell)!}{2^{\ell}(|E|-1)!}$.
\end{proposition}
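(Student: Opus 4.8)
The plan is to compute the probability exactly via a sequential exposure and then bound it by a term-by-term comparison of products. First I would dispose of the degenerate cases. The total number of vertex-copies is $\sum_v d_v = 2|E|$, a random configuration is a uniformly random perfect matching on these $2|E|$ copies, and each copy has a unique partner. So if some vertex-copy appears in two distinct specified pairs, the event that all specified pairs are joined has probability $0$ and the bound holds trivially; similarly, a pair listed twice can simply be deleted. Hence I may assume the $\ell$ specified pairs involve $2\ell$ distinct vertex-copies.

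Next I would expose the partners of the specified copies one pair at a time. Writing the pairs as $\{a_1,b_1\},\dots,\{a_\ell,b_\ell\}$, the partner of $a_1$ is uniform over the other $2|E|-1$ copies, so $a_1$ is matched to $b_1$ with probability $1/(2|E|-1)$; conditioned on this, the partner of $a_2$ is uniform over the remaining $2|E|-3$ copies, and so on down to $a_\ell$. Multiplying these conditional probabilities, the probability that all $\ell$ pairs are joined is exactly
\[\prod_{i=0}^{\ell-1}\frac{1}{2|E|-1-2i}=\frac{1}{(2|E|-1)(2|E|-3)\cdots(2|E|-2\ell+1)}.\]

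Finally I would rewrite the claimed upper bound as
\[\frac{(|E|-1-\ell)!}{2^\ell(|E|-1)!}=\frac{1}{2^\ell(|E|-1)(|E|-2)\cdots(|E|-\ell)}=\frac{1}{(2|E|-2)(2|E|-4)\cdots(2|E|-2\ell)},\]
which is a product of exactly $\ell$ factors in its denominator, matching the exact expression above. Comparing the two denominators factor by factor: for each $i$ with $1\le i\le\ell$ the exact expression has the factor $2|E|-(2i-1)$ while the bound has $2|E|-2i$, and $2|E|-(2i-1)>2|E|-2i$. Hence each denominator factor in the exact probability is strictly larger than the corresponding one in the bound, so the exact probability is at most the bound, as claimed.

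There is essentially no hard step here; the only things to be careful about are the bookkeeping of the degenerate cases in the first paragraph (so that the product comparison in the last step is legitimate) and aligning the indices of the two products so the factor-by-factor comparison applies.
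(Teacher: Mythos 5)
Your proposal is correct and takes essentially the same route as the paper: both arrive at the exact probability $\frac{1}{(2|E|-1)(2|E|-3)\cdots(2|E|-2\ell+1)}$ --- you by exposing partners sequentially, the paper by taking the ratio of matching counts $\frac{(2r)!}{2^r r!}$ --- and both then bound it by the identical factor-by-factor comparison with $2^{\ell}(|E|-1)(|E|-2)\cdots(|E|-\ell)$. The only (inessential) caveat is your remark that a repeated pair ``can simply be deleted'': the stated bound with the original $\ell$ need not follow from the bound with fewer pairs, but this degenerate case never arises in the paper's applications, where the specified pairs always form a partial matching.
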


\begin{proof}
The number of ways of pairing $2r$ points is
$\frac{(2r)!}{2^rr!}$.
So the ratio of the number of configurations
with those $\ell$ pairs joined to the total number of configurations is
\begin{eqnarray*}
\frac{(2|E|-2\ell)!/2^{|E|-\ell}(|E|-\ell)!}{(2|E|)!/2^{|E|}|E|!}
&=&\frac{2^{\ell}|E|(|E|-1)...(|E|-\ell+1)}{(2|E|)...(2|E|-2\ell+1)}\\
&=&\frac{1}{(2|E|-1)(2|E|-3)...(2|E|-2\ell+1)}\\
&<&\frac{1}{2^{\ell}(|E|-1)(|E|-2)...(|E|-\ell)}.
\end{eqnarray*}
\end{proof}

We will also use:
\begin{proposition}\label{psum} For any $w_1,...,w_n\geq 0$, the average over all
subsets $\{x_1,...,x_{\ell}\}\subset\{1,...,n\}$ of $\prod_{i=1}^{\ell}w_{x_i}$ is at
most the average over all $\ell$-tuples $(x_1,...,x_{\ell})\in\{1,...,n\}^{\ell}$
of $\prod_{i=1}^{\ell}w_{x_i}$.
\end{proposition}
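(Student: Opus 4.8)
The plan is to compare the two averages directly by writing each as a sum over index tuples and matching terms. Let $S_\ell$ denote the average over all $\ell$-element subsets $\{x_1,\dots,x_\ell\}\subseteq\{1,\dots,n\}$ of $\prod_{i=1}^\ell w_{x_i}$, and let $T_\ell$ denote the average over all ordered $\ell$-tuples $(x_1,\dots,x_\ell)\in\{1,\dots,n\}^\ell$ of $\prod_{i=1}^\ell w_{x_i}$. First I would observe that $T_\ell$ can be split according to the ``type'' of the tuple, i.e.\ according to how the coordinates coincide: the tuples with all coordinates distinct contribute exactly $\frac{\ell!\binom{n}{\ell}}{n^\ell}\,S_\ell$ to $T_\ell$ (there are $\ell!$ orderings of each $\ell$-subset, and $n^\ell$ tuples in total), while the remaining tuples — those with a repeated coordinate — contribute a nonnegative amount since all $w_i\geq0$. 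Hence $T_\ell \geq \frac{\ell!\binom{n}{\ell}}{n^\ell}\,S_\ell$.

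The second step is to check that $\frac{\ell!\binom{n}{\ell}}{n^\ell}\leq 1$, which is immediate since $\ell!\binom{n}{\ell}=n(n-1)\cdots(n-\ell+1)\leq n^\ell$. Combining the two inequalities gives $T_\ell\geq S_\ell$, which is exactly the claim. (One should note the degenerate case $\ell>n$, where there are no $\ell$-element subsets; in that case the statement is vacuous or the left-hand average is taken to be $0$, and the inequality holds trivially since $T_\ell\geq0$.)

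An alternative, essentially equivalent, route is a direct coupling/averaging argument: sample an ordered tuple $(x_1,\dots,x_\ell)$ uniformly at random, condition on the event that all coordinates are distinct, and note that conditioned on this event the unordered set $\{x_1,\dots,x_\ell\}$ is uniform over all $\ell$-subsets, so $\ex[\prod_i w_{x_i}]=S_\ell$; then $T_\ell=\ex[\prod_i w_{x_i}]\geq \Pr[\text{all distinct}]\cdot\ex[\prod_i w_{x_i}\mid \text{all distinct}] = \Pr[\text{all distinct}]\cdot S_\ell$, and again $\Pr[\text{all distinct}]\leq1$ together with $\prod_i w_{x_i}\geq0$ on the complementary event finishes the argument. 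I do not expect any real obstacle here; the only thing to be careful about is the bookkeeping of how many ordered tuples correspond to a given unordered set (exactly $\ell!$, since the set has distinct elements) and the harmless nonnegativity of the ``repeated-coordinate'' terms, which is where the hypothesis $w_i\geq0$ is used.
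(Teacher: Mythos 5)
There is a genuine gap here: your concluding step runs the inequality in the wrong direction. Writing $T_\ell$ for the tuple average and $S_\ell$ for the subset average, your decomposition correctly yields
$T_\ell \ge \frac{\ell!\binom{n}{\ell}}{n^\ell}\,S_\ell = \Pr[\text{all distinct}]\cdot S_\ell$,
but from this together with $\frac{\ell!\binom{n}{\ell}}{n^\ell}\le 1$ you cannot deduce $T_\ell\ge S_\ell$: the factor being at most $1$ makes your lower bound on $T_\ell$ \emph{weaker} than $S_\ell$, not stronger (from $T\ge cS$ with $c\le 1$ nothing follows about $T$ versus $S$). A sanity check: take $n=\ell=2$ and $w_1=w_2=1$; then $S_2=T_2=1$ while your bound only gives $T_2\ge \frac{1}{2}$, so the ``combination'' step is a non sequitur. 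The same flaw is present in your alternative conditioning formulation. To close the argument along your lines you would have to show, in addition, that the conditional average of $\prod_i w_{x_i}$ given a repeated coordinate is at least $S_\ell$; nonnegativity gives you nothing in that direction, and that claim is essentially as strong as the proposition itself (the statement is Maclaurin's inequality comparing the $\ell$-th elementary symmetric mean with the first, so some genuine inequality input is unavoidable).

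For comparison, the paper's proof avoids this issue by a smoothing (extremal) argument: the tuple average equals $n^{-\ell}\bigl(\sum_{i=1}^n w_i\bigr)^{\ell}$ and hence depends only on $\sum_i w_i$; holding that sum fixed and replacing two unequal weights $w_i\le w_j$ by $w_i-\epsilon$, $w_j+\epsilon$ leaves the tuple average unchanged but decreases the subset average (only the terms containing both $i$ and $j$ change, and $w_iw_j$ decreases). Hence, for a fixed sum, the subset average is maximized when all the $w_i$ are equal, and in that case the two averages coincide, giving $S_\ell\le T_\ell$. If you want to keep a counting flavour, you could instead invoke or prove Maclaurin's inequality directly, but the distinct-versus-repeated decomposition by itself does not suffice.
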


\begin{proof}
It is trivially true if $n=1$.  Note that the average over
all $\ell$-tuples $(x_1,...,x_{\ell})\in\{1,...,n\}^{\ell}$
of $\prod_{i=1}^{\ell}w_{x_i}$ is equal to $n^{-\ell}\left(\sum_{i=1}^n w_i\right)^{\ell}$
and hence is determined by $\sum_{i=1}^n w_i$.  The proposition now follows from
the easy observations:  (i) the two averages are equal if $w_1=...=w_n$ and (ii)
if $w_i\leq w_j$ then replacing $w_i,w_j$ by $w_i-\e,w_j+\e$ decreases $w_iw_j$
and hence decreases the average over all
subsets $\{x_1,...,x_{\ell}\}\subset\{1,...,n\}$ of $\prod_{i=1}^{\ell}w_{x_i}$.
\end{proof}

{\bf Proof of Theorem \ref{t2}(b)} As noted by Karonski for the proof of the very similar
Lemma 1(iii) of \cite{tl}:
if a component contains at least two cycles then it must contain at least one of
the following two subgraphs:
\begin{itemize}
\item $W_1$ - two vertices $u,v$ that are joined by three paths, where the paths are
vertex-disjoint except for at their endpoints.
\item $W_2$ - two edge-disjoint cycles, one containing $u$ and the other containing $v$,
and a $(u,v)$-path that is edge-disjoint from the cycles.  We allow $u=v$ in which case the
path has length zero.
\end{itemize}
In particular, if it contains two cycles that share more than one vertex, then
it is easy to see that it must contain a pair of cycles that form $W_1$. And
if it contains two cycles that share at most one vertex,
then those cycles plus a shortest path between them must form $W_2$.

We will prove that the expected number of such subgraphs of size at most $\inv{4}n$
is less than $\hf \e$.  By part (a) (after rescaling $\e$), the probability that there
is any component of size greater than $\inv{4}n$ is less than $\hf\e$. This proves the
theorem. We begin with the expected number of $W_1$'s.

Specify $u,v$, the number of internal vertices on each path - $\ell_1,\ell_2,\ell_3$, and the
internal vertices - $x_1,...,x_{\ell_1+\ell_2+\ell_3}$, in order along the paths.
Next specify exactly which vertex-copies are paired to form the edges of $W_1$; the number
of choices is $d_u(d_u-1)(d_u-2)\times d_v(d_v-1)(d_v-2)\times\prod_{i=1}^{\ell_1+\ell_2+\ell_3}d_{x_i}(d_{x_i}-1)$.
Therefore, applying Proposition \ref{pc}, the expected number of such subgraphs is at most:
\begin{equation}\label{ew1}
\sum_{u,v}\sum_{\ell_1,\ell_2,\ell_3\geq0}\sum_{x_1,...,x_{\ell_1+\ell_2+\ell_3}}
d_u(d_u-1)(d_u-2)\times d_v(d_v-1)(d_v-2)\times\prod_{i=1}^{\ell_1+\ell_2+\ell_3}d_{x_i}(d_{x_i}-1)
\times \frac{(|E|-1-(\ell_1+\ell_2+\ell_3+3))!}{2^{(\ell_1+\ell_2+\ell_3+3)}(|E|-1)!}.
\end{equation}

Setting $\ell=\ell_1+\ell_2+\ell_3$ and $w_{x_i}=d_{x_i}(d_{x_i}-1)$,
Proposition \ref{psum} implies that:
\[\sum_{x_1,...,x_{\ell}}\prod_{i=1}^{\ell}w_{x_i}
\times \frac{(n-\ell)!}{n!}\leq \left(\frac{\sum_{x\in V(G)} w_x}{n}\right)^{\ell}.\]
By Lemma \ref{lem:observations}(a), we have $|E|-1<n$ (since $Q<0$) and so
$\frac{(|E|-1-\ell)!}{(|E|-1)!}\times(|E|-1)^{\ell}<\frac{(n-\ell)!}{n!}\times n^{\ell}$. This implies:
\begin{equation} \label{esum}
\sum_{x_1,...,x_{\ell}}
\prod_{i=1}^{\ell}w_{x_i}
\times \frac{(|E|-1-\ell)!}{(|E|-1)!}
<\left(\frac{\sum_{x\in V(G)} w_x}{|E|-1}\right)^{\ell}
=\left(\frac{\sum_{x\in V(G)} w_x}{|E|}\right)^{\ell}\times(1+\inv{|E|-1})^{\ell}.
\end{equation}
Since $\ell\leq\inv{4} n$ and $|E|\geq\hf n$  (by Lemma \ref{lem:observations}(a)), we have $(|E|-1-(\ell+1))(|E|-1-(\ell+2))(|E|-1-(\ell+3))>\inv{10}|E|^3$ and $(1+\inv{|E|-1})^{\ell}<e^{n/(4|E|-4)}<e$.
Thus by (\ref{ew1}), the expected number of $W_1$ subgraphs is at most:
\begin{eqnarray*}
&&\frac{10e}{8|E|^3}\left(\sum_{u\in V(G)}d_u(d_u-1)(d_u-2)\right)^2
\left(\frac{\sum_{x\in V(G)}d_x(d_x-1)}{2|E|}\right)^{\ell_1+\ell_2+\ell_3}\\
&<&\frac{5eR^2}{|E|}\sum_{\ell_1,\ell_2,\ell_3\geq0}(1+Q)^{\ell_1+\ell_2+\ell_3}\\
&=&\frac{5eR^2}{|E|}\left(\sum_{\ell\geq0}(1+Q)^{\ell}\right)^3\\
&=&\frac{5eR^2}{|E||Q|^3}<\frac{e}{20\omega(n)^3}<\frac{1}{4}\e.
\end{eqnarray*}
A nearly identical argument shows that
the expected number of subgraphs of type $W_2$ with $u\neq v$ is also at most $\frac{1}{4}\e$ -this time $\ell_1,\ell_2,\ell_3$
denote the number of vertices, other than $u,v$ on the two cycles and the path.
In the case where $u=v$, $\left(\sum_{u}d_u(d_u-1)(d_u-2)\right)^2$ is replaced with
$\sum_u d_u(d_u-1)(d_u-2)(d_u-3)$, which is smaller.

This proves that Theorem~\ref{t2}(b) holds for
a random configuration.  Proposition \ref{pcm} implies that it holds for a random graph.
\proofend

{\bf Remark:} Note that in the proof of part (b), Condition D was only used to (i) allow us
to apply part (a) to show that the size of the largest component, and hence $\ell$, is at most $\inv{4} n$, and
(ii) switch from random configurations to random graphs. Step (i) could have been carried
out without Condition D:  any bound of the form $|E|-\Theta(|E|)$ would have sufficed,
and we can obtain such a bound easily, eg. by arguing that with high probability,
there are $\theta(|E|)$ components of size 2.  Step (ii) can be carried out
under much weaker conditions than Condition D.

\section{Proof of Theorem~\ref{thm:UpperBound}(b)}\label{sub} In this section we turn to
the critical range of $Q$; i.e. $-\lambda n^{-1/3} R^{2/3}\leq Q \le \lambda n^{-1/3} R^{2/3}$.
We will bound the probability
that the size of the largest component is too big.
Without loss of generality, we can assume that $\lambda>\frac{1600}{\z}$.

Our proof follows along the same lines as that of Theorem 1 (see also Theorem 7) of \cite{np1}.

We wish to show that  there exists a constant $B>1$ such that
with probability at least $1-\epsilon$, the largest component has size at most
$B n^{2/3} R^{-1/3}$. To do so, we set $T:=n^{2/3} R^{-1/3}$ and bound the probability
that our branching process starting at a given vertex $v$ does not return to zero within $T$ steps.

Lemma \ref{lem:concQ} yields that, with high probability, $|Q_t-Q|\leq\hf |Q|+ \frac{800}{\z}n^{-1/3} R^{2/3}$
for every $t\leq T$. Since we assume $\lambda>\frac{1600}{\z}$, this implies $Q_t\leq2\lambda n^{-1/3} R^{2/3}$.

The fact that the drift, $Q_t$, may be positive makes this case a bit trickier than
that in the previous section, and so we need a more involved argument.  It will be convenient
to assume that $Y_t$ is bounded by $H:=\frac{1}{12 \lambda}n^{1/3} R^{1/3}$,
so we add $Y_t\geq H$ to our stopping time conditions.  We also need to add a condition
corresponding to the concentration of $R$. Specifically, we define
$$\gamma := \min \{t: \mbox{$(Y_t=0)$, $(Y_t \ge H)$, $(Q_{t}>2\lambda n^{-1/3} R^{2/3})$, $(|R_t-R|> R/2)$ or $(t=T)$}\}.$$
Since $\D\leq n^{1/3}R^{1/3}/\ln n$, we have $T<\frac{\z}{400}\frac{n}{\D}$ for $n$ sufficiently large.
So Lemmas \ref{lem:concR} and \ref{lem:concQ} imply that, with high probability, we will not
have $Q_{\g}>2\lambda n^{-1/3} R^{2/3}$ or $|R_{\g}-R|> R/2$.  So by upper bounding $\pr(Y_{\g}\geq H)$
and $\pr(\g=T)$, we can
obtain a good lower bound on $\pr(Y_t=0)$ which, in turn, is a lower bound on $Y_t$ reaching zero before
reaching $H$.

For $t \le \gamma$, we have $Q_{t-1} \le 2\lambda n^{-1/3} R^{2/3}$ and so:
\begin{equation}\label{ehq}
HQ_{t-1}\leq\inv{6}R
\end{equation}

For $t\leq\g$, we also have $Y_{t-1}>0$ and so $\ex(\eta_t)$ and $\ex(\eta_t^2)$ are as in (\ref{eq:Qt}) and (\ref{eq:Rt}).  We also have $R_{t-1}\geq\hf R$ and (\ref{ehq}).
For small enough $x \ge 0$,  $e^{-x} \ge 1-x+x^2/3$. So for $n$ sufficiently large,
$|\eta_t/H|\le (2+\Delta)/H<(\ln n)^{-1}$ is small enough to yield:
\begin{eqnarray*}
\Ex[e^{-\eta_t/H} | C_{t-1}] & \ge & 1-\Ex[\frac{\eta_t}{H}|C_{t-1}]
+\frac{1}{3}\Ex[\frac{\eta_t^2}{H^2}|C_{t-1}]
= 1-\frac{Q_{t-1}}{H}+\frac{R_{t-1}}{3H^2}\\
&\ge& 1-\frac{R}{6H^2}+\frac{R}{6H^2} = 1.
\end{eqnarray*}
This shows that $e^{-Y_{\min(t, \gamma)}/H}$ is a submartingale,
and  so we can apply the Optional Stopping Theorem with stopping time $\t:=\g$. As $Y_{\gamma-1} \le H$,
we have $Y_\gamma \le H+\D< 2H$. Recalling that we begin our branching process
at vertex $v$ and applying $x/4 \le 1-e^{-x}$, for $0 \le x \le 2$, we have:
$$e^{-d_v/H}=e^{-Y_0/H}\le \Ex e^{-Y_\gamma/H} \le \Ex\left[1-\frac{Y_\gamma}{4H}\right],$$
which, using the fact that for $x>0$, $1-e^{-x} \le x$, implies
\begin{equation}
\label{eq:boundYgamma}
\Ex[Y_\gamma] \le 4H(1-e^{-d_v/H}) \le 4d_v.
\end{equation}
In particular
\begin{equation}
\label{eq:boundPrH} \Pr[Y_\gamma \ge H] \le \frac{4d_v}{H}.
\end{equation}

Now we turn our attention to $\pr(\g=T)$. We begin by bounding:
\[\Ex[Y_t^2-Y_{t-1}^2|C_{t-1}] = \Ex[(\eta_t+Y_{t-1})^2-Y_{t-1}^2|C_{t-1}]=\Ex[\eta_t^2|C_{t-1}]-2\Ex[\eta_tY_{t-1}|C_{t-1}].\]
For $t\leq\g$, we have $Y_{t-1}>0$ and so $\Ex[\eta_t|C_{t-1}]=Q_{t-1}$.
Thus $\Ex[\eta_tY_{t-1}|C_{t-1}]=Q_{t-1}Y_{t-1}$.  Also, for $t \le \gamma$, we must have
$Y_{t-1}<H$, $R_{t-1}\geq\hf R$ and (\ref{ehq}) so:
\[\Ex[Y_t^2-Y_{t-1}^2|C_{t-1}]\ge R_{t-1} - 2H\max(Q_{t-1}, 0)
\ge \frac{R}{2}-\frac{R}{3} =\frac{R}{6}.\]
Thus $Y_{\min(t,\gamma)}^2 -\frac{1}{6}R\min (t,\gamma)$ is a submartingale, and so by
the Optional Stopping Theorem we have:
\[\Ex\left[ Y_\gamma^2 -\frac{R\gamma}{6}\right]\geq Y_0^2=d_v^2\geq0.\]
This, together with (\ref{eq:boundYgamma}) and the fact (derived above) that
$Y_\gamma \le 2H$, implies that
$$\Ex \gamma \le \frac{6}{R} \Ex Y_\gamma^2 \le \frac{12 H}{R} \Ex Y_\gamma \le \frac{48 H d_v}{R},$$
showing
\begin{equation}
\label{eq:PrGammaGrT} \Pr[\gamma =T] \le \frac{48 H d_v}{R T}.
\end{equation}
We conclude from (\ref{eq:boundPrH}), (\ref{eq:PrGammaGrT}), and
Lemmas~\ref{lem:concR} and~\ref{lem:concQ}  that, for $n$ sufficiently large,
\begin{eqnarray*}
\Pr[|{\cal C}_v|\geq T] &\le& \Pr[Y_\gamma>H] + \Pr[\gamma=T]+\Pr[Q_{t}>2\lambda n^{-1/3} R^{2/3}]
+\Pr[|R_{\gamma}-R|>R/2] \\
&\le& \frac{4 d_v}{H}+\frac{48 H d_v}{R T}+Tn^{-10}+Tn^{-10} \\
&\le& 48 \lambda n^{-1/3}R^{-1/3}  d_v
+\frac{48  n^{1/3} R^{1/3} d_v}{12 \lambda n^{2/3} R^{2/3}} + 2Tn^{-10}
<50 \lambda n^{-1/3} R^{-1/3}  d_v.
\end{eqnarray*}
For some constant $B\geq1$, let $N$ be the number of vertices lying in components of
size at least $K:=B n^{2/3} R^{-1/3}\ge T$. Recalling that $\sum_v d_v=2|E|<3n$ by
Lemma~\ref{lem:observations}(a), we have
\begin{eqnarray*}
\Pr[|\cm| \ge K] &\le& \Pr[N \ge K]  \le  \frac{\Ex[N]}{K}
\le \frac{1}{K} \sum_{v \in V} \Pr[{\cal C}_v \ge K] \le \frac{1}{K}
\sum_{v \in V} \Pr[{\cal C}_v \ge T] \\
&\le& \frac{1}{K} \sum_{v \in V}
50 \lambda n^{-1/3} R^{-1/3}  d_v=\frac{50\lambda}{nB}\sum_{v}d_v
< \frac{150 \lambda}{B},
\end{eqnarray*}
which can be made to be less than $\e$ by taking $B$ to be sufficiently
large.
This proves that Theorem~\ref{thm:UpperBound}(b) holds for
a random configuration.  Proposition \ref{pcm} implies that it holds for a random graph.
\proofend

\section{Proof of Theorem~\ref{thm:UpperBound}(a)}\label{stua}
In this section we bound the probability
that the size of the largest component is too small when $Q$ is in the critical range.
Our proof follows along the same lines as that of Theorem 2 of \cite{np1}.

Recall that we have $-\lambda n^{2/3} R^{2/3}\leq Q \le \lambda n^{-1/3} R^{2/3}$.
Without loss of generality, we can assume that $\lambda>\frac{1600}{\z}$.

We wish to show that  there exists a constant $A>0$ such that
with probability at least $1-\e$, the largest component has size at least
$A n^{2/3} R^{-1/3}$.

We will first show that, with sufficiently high
probability, our branching process reaches a certain value $h$. Then we will show
that, with sufficiently high probability, it will take at least
$An^{2/3} R^{-1/3}$ steps for it to get from $h$ to zero, and thus there
must be a component of that size.

We set $T_1:=n^{2/3} R^{-1/3}$ and $T_2:=An^{2/3} R^{-1/3}$.
For $t \le T_1+T_2 \le 2 n^{2/3} R^{-1/3}$ (for $A\leq 1$),
Lemma \ref{lem:concQ}  yields that, with high probability, $|Q_t-Q|\leq\hf |Q|+ \frac{800}{\z}n^{-1/3} R^{2/3}$ and thus
(since $\lambda>\frac{1600}{\z}$)
\[Q_t\ge -2 \lambda n^{-1/3} R^{2/3}.\]
%
We set
\[h:=
A^{1/4} n^{1/3}R^{1/3}\]
%
%
so that if $Q_t\ge -2 \lambda n^{-1/3} R^{2/3}$ and $A<(16\lambda)^{-4}$ then
\begin{equation}
\label{eq:assignh} h Q_t \ge -2\lambda A^{1/4} R \geq -\frac{R}{8}.
\end{equation}

We start by showing that $Y_t$ reaches $h$, with sufficiently high probability.
To do so, we define $\t_1$ analogously to $\g$ from
Section \ref{sub}, the only difference being that we allow $Y_t$ to return to zero before $t=\t_1$.
$$\tau_1= \min \{t : \mbox{$(Y_t \ge h)$,  $(Q_t< -2 \lambda n^{-1/3} R^{2/3})$, $(|R_t-R| > R/2)$, or $(t=T_1)$}\}.$$
We wish to show that, with sufficiently high probability, we get $Y_{\t_1}\geq h$.  We know that
the probability of $Q_{\t_1}< -2 \lambda n^{-1/3} R^{2/3}$ or $|R_{\t_1}-R| > R/2$ is small by Lemmas~\ref{lem:concR} and~\ref{lem:concQ}.  So it remains to bound $\pr(\t_1=T_1)$.
For $t \le \tau_1$, if $Y_{t-1}>0$, then by (\ref{eq:Qt}), (\ref{eq:Rt}), (\ref{eq:assignh}) and the fact that $Y_{t-1}<h$:
$$
\Ex[Y_t^2-Y_{t-1}^2|C_{t-1}] = \Ex [\eta_{t}^2|C_{t-1}] +
2\Ex[\eta_t Y_{t-1}|C_{t-1}] \ge R_{t-1}+2h \min(Q_{t-1},0) \ge
\frac{R}{2} - \frac{R}{4} \ge R/4,
$$
Also if $Y_{t-1}=0$, then by (\ref{eq:etat2FromZero}) we have
$$
\Ex[Y_t^2-Y_{t-1}^2|C_{t-1}]=\Ex [\eta_{t}^2|C_{t-1}] \ge
R_{t-1}/2\ge R/4.
$$
Thus $Y_{\min(t,\tau_1)}^2 - \frac{1}{4}R\min(t, \tau_1)$ is a submartingale, so we can apply
the Optional Stopping Theorem to obtain:
$$\Ex Y_{\tau_1}^2-\frac{R}{4} \Ex \tau_1 \ge Y_0^2 \ge 0,$$
and as $Y_{\tau_1} \le 2h$,
\[\Ex \t_1\leq\frac{4}{R}\Ex Y_{\tau_1}^2\leq\frac{16 h^2}{R}.\]
Hence
\begin{equation}
\Pr[\tau_1 = T_1] \le \frac{16 h^2}{R T_1}.
\end{equation}
By the bound $\D\leq n^{1/3}R^{1/3}/\ln n$, we have $T_1+T_2<\frac{\z}{400}\frac{n}{\D}$.
So Lemmas~\ref{lem:concR}~and~\ref{lem:concQ} imply that for
sufficiently large $n$,
\begin{equation}
\label{eq:boundSuccessI} \Pr[Y_{\tau_1} < h] \le \Pr[\tau_1=T_1] +
\Pr[Q_{\t_1}< -2 \lambda n^{-1/3} R^{2/3}]+\Pr[|R_{\tau_1}-R| > R/2]\le
\frac{16h^2}{R T_1}+ 2 T_1n^{-10} < 20\sqrt{A}.
\end{equation}

This shows that with probability at least $1-20\sqrt{A}$, $Y_t$ will reach $h$
within $T_1$ steps.  If it does reach $h$, then the largest component must have size at least $h$,
which is not as big as we require.  We will next show that, with sufficiently high probability,
it takes at least $T_2$ steps for $Y_t$ to return to zero, hence establishing that the
component being exposed has size at least $T_2$, which is big enough to prove the theorem.
Define
$$\tau_2=\min\{s : \mbox{$(Y_{\tau_1+s}=0)$,
$(Q_{\t_1+s}< -2 \lambda n^{-1/3} R^{2/3})$, $(|R_{\tau_1+s}-R| > R/2)$, or $(s=T_2)$} \}.$$
We wish to show that, with sufficiently high probability, we get $\t_2=T_2$ as this implies
$Y_{\t_1+T_2-1}>0$. We know that the probability of $Q_{\t_1+\t_2}< -2 \lambda n^{-1/3} R^{2/3}$ or
$|R_{\t_1+\t_2}-R| > R/2$ is small by Lemmas~\ref{lem:concR} and~\ref{lem:concQ}.  So it remains to bound $\pr[Y_{\tau_1+s}=0]$.

It will be convenient to view the random walk back to $Y_t=0$ as a walk from 0 to $h$ rather
than from $h$ to 0; and it will also be convenient if that walk never drops below 0.
So we define $M_s=h-\min\{h,Y_{\tau_1+s}\}$, and thus $M_s\geq0$ and $M_s=h$ iff $Y_{\t_1+s}=0$.
If $0<M_{s-1}<h$, then $M_{s-1}=h-Y_{\tau_1+s-1}$ and since $M_s \le |h-Y_{\tau_1+s}|$, we have in this case that:
\begin{eqnarray}
\nonumber M_s^2-M_{s-1}^2 &\le& (h-Y_{\tau_1+s})^2-(h-Y_{\tau_1+s-1})^2\\
\nonumber &=&2h(Y_{\tau_1+s-1}-Y_{\tau_1+s})+Y_{\tau_1+s}^2-Y_{\tau_1+s-1}^2\\
\nonumber &=&\eta_{\tau_1+s}(Y_{\tau_1+s}+Y_{\tau_1+s-1}-2h)\\
\nonumber &=&\eta_{\tau_1+s}(\eta_{\tau_1+s}-2M_{s-1})\\
\label{eq:increaseInMs1}&=&\eta_{\tau_1+s}^2 - 2 \eta_{\tau_1+s} M_{s-1}.
\end{eqnarray}
If $M_{s-1}=0$, then $Y_{\tau_1+s-1}\geq h$ and so
\begin{equation}
\label{eq:increaseInMs2}
M_s^2-M_{s-1}^2=M_s^2 \le \eta_{\tau_1+s}^2.
\end{equation}
For $1 \le s \le \tau_2$, we have $M_{s-1}<h$, (\ref{ehq})
and by (\ref{eq:etat2FromZero}) we have $\ex[\eta^2_{\t_1+s}]=R_{\t_1+s}\leq\frac{3}{2}R$
since $|R_{\t_1+s}-R|\le R/2$. Applying those, along with
(\ref{eq:increaseInMs1}), (\ref{eq:increaseInMs2}) and
(\ref{eq:assignh}) we conclude that such values of $s$,
\begin{eqnarray*}
\Ex[M_s^2-M_{s-1}^2|C_{\tau_1+s-1},\tau_1]  &\le&
\max(\Ex[\eta_{\tau_1+s}^2|C_{\tau_1+s-1},\tau_1],\Ex[\eta_{\tau_1+s}^2
- 2 \eta_{\tau_1+s} M_{s-1}|C_{\tau_1+s-1},\tau_1]) \\
&\le& \max\left(\frac{3R}{2},\frac{3R}{2}-2hQ_{\tau_1+s-1}\right) \le \frac{3R}{2}+\frac{R}{4} < 2R.
\end{eqnarray*}
Let $\Ex_h$ and $\Pr_h$ denote  respectively the conditional
expectation and the conditional probability given the event
$\{Y_{\tau_1} \ge h\}$. So $M_{s\wedge \tau_2}^2 - 2R(s \wedge
\tau_2)$ is a supermartingale under $\Ex_h$, and the Optional
Stopping Theorem yields:
$$\Ex_h[M_{\tau_2}^2 - 2R \tau_2] \le \Ex_h M_0^2=0.$$
This, along with the fact that $\t_2\leq T_2$ yields:
$$\Ex_h M_{\tau_2}^2 \le 2R \Ex_h \tau_2 \le 2T_2 R.$$
Hence by (\ref{eq:boundSuccessI}) and
Lemmas~\ref{lem:concR}~and~\ref{lem:concQ}, we have that for $n$ sufficiently large:
\begin{eqnarray*}
\Pr_h[\tau_2 < T_2] &\le& \Pr_h[M_{\tau_2} \ge h]
+\Pr_h[Q_{\t_1+\t_2}< -2 \lambda n^{-1/3} R^{2/3}]
+\Pr_h[|R_{\tau_1+\tau_2}-R|>R/2] \\ &\le& \frac{\Ex_h
M_{\tau_2}^2}{h^2} + \frac{2T_2n^{-10}}{\Pr[Y_{\tau_1} \ge h]}  \le
\frac{2 T_2 R}{h^2}+ \frac{2T_2n^{-10}}{1-20\sqrt{A}} \le \frac{3T_2 R}{h^2}.
\end{eqnarray*}
Combining this with (\ref{eq:boundSuccessI}) we conclude
$$\Pr[|\cm| < T_2] \le \Pr[\tau_2 < T_2] \le \Pr[Y_{\tau_1} < h]+ \Pr_h[\tau_2 < T_2] \le  20\sqrt{A} + \frac{3T_2R}{h^2}
=23\sqrt{A}<\e,$$
for $A<(\frac{\e}{23})^2$. (Recall that we also require $A<(16\lambda)^{-4}$.)
This proves that Theorem~\ref{thm:UpperBound}(a) holds for
a random configuration.  Proposition \ref{pcm} implies that it holds for a random graph.
\proofend

{\bf Remark:}  Recall that in Section \ref{sd} we said that if one vertex $v$ has degree $\D\gg n^{1/3}$
and all other vertices have small degrees - small enough that the degree sequence obtained by removing $v$
has $R=O(1)$ and $Q=O(n^{-1/3})$ - then with high probability there will be a component of size $O(n^{1/3})$.  To prove this,
we follow the proof of Theorem \ref{thm:UpperBound}(a),
beginning the branching process with $v$.
Note that this yields $R_i=O(1)$ for every $i\geq 0$, and this allows us to replace $R$ by $R_0=O(1)$
throughout the proof.  Thus, eg. we set $T_1:=n^{2/3} R_0^{-1/3}, T_2:=An^{2/3}R_0^{-1/3}$ and
$h:=A^{1/4}n^{1/3}R_0^{1/3}$. Lemmas~\ref{lem:concR}~and~\ref{lem:concQ}
are easily seen to hold with $R$ replaced by $R_0$.
Note that (for $n$ sufficiently large) $h<\D=d_v=Y_0$, and so we can skip the first part of the proof,
where we show that $Y_i$ eventually reaches $h$ with high probability.

\section{Proof of Theorem \ref{t3}}
We close this paper with the supercritical range; i.e. when $Q>\omega(n) n^{-1/3}R^{2/3}$,
where $\omega(n)$ grows with $n$.  We wish to show that there exists a constant $A$  such that
with probability at least $1-\e$, the largest component has size at least $AQn/R$.

The same argument as used for the proof of Theorem~\ref{thm:UpperBound}(a) applies here.
In fact, the argument is a bit simpler here since we will always have the drift $Q_t>0$.

We fix $A$ later, and set $h=A^{1/4}\sqrt{Qn}$, $T_1=\frac{\z}{2000}Qn/R$ and $T_2=AQn/R$.
If $A<\frac{\z}{2000}$ then Lemmas \ref{lem:concR}~and~\ref{lem:concQ} imply
that for any $t\leq T_1+T_2$ we have, with high probability, $|R_t-R|\leq\hf R$ and
$|Q_t-Q|\leq \hf Q+ \frac{800}{\z}n^{-1/3} R^{2/3}<\frac{3}{4}Q$ for $n$ sufficiently large in terms of $\omega$.
So we define our stopping times as:
\begin{eqnarray*}
\tau_1&=& \min \{t : \mbox{$(Y_t \ge h)$,  $(Q_t< \inv{4}Q)$, $(|R_t-R| > R/2)$, or $(t=T_1)$},\}\\
\tau_2&=&\min\{s : \mbox{$(Y_{\tau_1+s}=0)$,
$(Q_{\t_1+s}<\inv{4}Q)$, $(|R_{\tau_1+s}-R| > R/2)$, or $(s=T_2)$}.
\end{eqnarray*}
Note that the analogue (\ref{ehq}) holds trivially since for $t\leq\t_1+\t_2$ we have $Q_t>0$.
In fact, (\ref{ehq}) was only required to deal with the possibility that $Q_t$ was negative,
and so it is not needed for this case.

For $n$ sufficiently large, the same arguments (simplified slightly since $Q_t\geq0$) still yield:
\begin{eqnarray*}
\Pr[Y_{\t_1}<h]&\leq&\frac{16h^2}{R T_1}+ 2 T_1n^{-10} < 20\sqrt{A}\times\frac{2000}{\z},\\
\Pr_h[\tau_2 < T_2] &\le&  \frac{3T_2 R}{h^2} \leq 3\sqrt{A},
\end{eqnarray*}
and so $\Pr[|\cm| < T_2] \le 20\sqrt{A}\times\frac{2000}{\z} + 3\sqrt{A}<\e$ for $A$ sufficiently small.
This proves that Theorem~\ref{t3} holds for
a random configuration.  Proposition \ref{pcm} implies that it holds for a random graph.
\proofend

\end{document}